\pgfplotsset{width=10cm,compat=1.9}
\numberwithin{equation}{section}
\numberwithin{equation}{section}
\newcommand{\rG}{{\rm G}}
\newcommand{\bA}{{\bf A}}
\newcommand{\bF}{{\bf F}}
\newcommand{\cF}{\mathcal{F}}
\newcommand{\cH}{\mathcal{H}}
\newcommand{\cO}{\mathcal{O}}
\newcommand{\SO}{{\rm SO}}
\newcommand{\G}{{\rm G}}
\newcommand{\Sp}{{\rm Sp}}
\newcommand{\Spin}{{\rm Spin}}
\newcommand{\SU}{{\rm SU}}
\newcommand{\U}{{\rm U}}
\renewcommand{\epsilon}{\varepsilon}
\renewcommand{\Im}{\mathop{\mathrm{Im}}}
\renewcommand{\Re}{\mathop{\mathrm{Re}}}
\newcommand{\vol}{\mathrm{vol}}
\def\<{\mathopen{}\left<}
\def\>{\right>\mathclose{}}
\def\({\mathopen{}\left(}
\def\){\right)\mathclose{}}
\newtheorem{theorem}{Theorem}
\newtheorem{proposition}[theorem]{Proposition}
\newtheorem{corollary}[theorem]{Corollary}
\newtheorem{lemma}[theorem]{Lemma}
\theoremstyle{definition}
\newtheorem{definition}[theorem]{Definition}
\newtheorem{example}[theorem]{Example}
\newtheorem{remark}[theorem]{Remark}
\numberwithin{equation}{section}
\newtheorem*{theorem*}{Theorem}
\newtheorem*{prop*}{Proposition}
\newtheorem*{corol*}{Corollary}
\numberwithin{theorem}{section}
\author{Jason D. Lotay}
\address{University of Oxford, U.K.}
\urladdr{\href{http://people.maths.ox.ac.uk/lotay/}{http://people.maths.ox.ac.uk/lotay/}}
\email{jason.lotay@maths.ox.ac.uk}
\author{Gon\c calo Oliveira}
\address{Universidade Federal Fluminense, IME-GMA, Niter\'oi, Brazil}
\urladdr{\href{https://sites.google.com/view/goncalo-oliveira-math-webpage/home}{https://sites.google.com/view/goncalo-oliveira-math-webpage/home}}
\email{galato97@gmail.com}
\title[Examples of deformed G\textsubscript{2}-instantons]{Examples of deformed G\textsubscript{2}-instantons/Donaldson--Thomas connections}
\begin{document}

\begin{abstract}
In this note, we provide the first non-trivial examples of deformed $\G_2$-instantons, originally called deformed Donaldson--Thomas connections.  As a consequence, we see how deformed $\G_2$-instantons can be used to 
distinguish between nearly parallel $\G_2$-structures and isometric $\G_2$-structures on 3-Sasakian 7-manifolds. Our examples give non-trivial deformed $\G_2$-instantons with obstructed deformation theory and situations where the moduli space of deformed $\G_2$-instantons has components of different dimensions.  We finally study the relation between our examples and a Chern--Simons type functional which has deformed $\G_2$-instantons as critical points.
\end{abstract}

\maketitle

\tableofcontents




\section{Introduction}

Gauge theory and calibrated geometry are central topics of study in the context of $\G_2$ geometry, and they are intimately related.  
Based on ideas stemming from Mirror Symmetry, particularly SYZ fibrations, and the real Fourier--Mukai transform, the authors of \cites{LeeLeung} introduced the following gauge-theoretic equation (in the context of complex line bundles over $\G_2$-manifolds)  as a proposed mirror to certain calibrated cycles in $\G_2$-manifolds.  

\begin{definition}
Let $(X^7,\varphi)$ be a 7-manifold with a coclosed $\G_2$ structure $\varphi$, let $\psi=*_{\varphi}\varphi$ be the dual of $\varphi$, and let $L$ be a Hermitian complex line bundle on $X$.  A unitary connection $A$ on $L$ is a \emph{deformed $\G_2$-instanton} if its curvature $F_A$ satisfies
\begin{equation}\label{eq:dG2.intro}
\frac{1}{6}F_A^3+F_A\wedge \psi=0.
\end{equation}
The definition can obviously be extended to higher rank vector bundles and principal bundles but,
 based on \cites{LeeLeung}, one is primarily interested in the case of complex line bundles. When $(X,\varphi)$ is additionally a $\G_2$-manifold, deformed $\G_2$-instantons are, in a certain sense, ``mirror'' to (co)associative cycles.
\end{definition}

\begin{remark}
In \cites{LeeLeung}, deformed $\G_2$-instantons are called \emph{deformed Donaldson--Thomas connections}.  However, since $A$ is a \emph{$\G_2$-instanton} on $(X^7,\varphi)$ if and only if
\begin{equation}\label{eq:G2inst.intro}
F_A\wedge\psi=0,
\end{equation}
the authors feel it is more appropriate that solutions of \eqref{eq:dG2.intro} are called deformed $\G_2$-instantons.  Moreover, there is a natural relationship between deformed $\G_2$-instantons and deformed Hermitian-Yang--Mills connections, which is parallel to the relationship between $\G_2$-instantons and Hermitian-Yang--Mills connections, that gives further rationale for the nomenclature.
\end{remark}

\subsection{Main results} The main results of this article are the first constructions of non-trivial solutions to the deformed $\G_2$-instanton equation \eqref{eq:dG2.intro}. Here, by non-trivial, we mean deformed $\G_2$-instantons that are not flat and do not arise via pullback from lower-dimensional constructions. Our construction takes place on compact $7$-manifolds equipped with $\rG_2$-structures related to the existence of a $3$-Sasakian structure. This is a setting where interesting families of $\rG_2$-structures can be found and at the same time symmetries can be used to turn the problem of finding deformed $\rG_2$-instantons into a more tractable one. 

As an application of our construction we have the following result (Corollary  \ref{cor:examples.np}).

\begin{theorem}\label{thm:np.intro}
Let $X^7$ be a compact $3$-Sasakian $7$-manifold and let $L_0$ be the trivial complex line bundle on $X$.   Let $\varphi^{ts}$ be the standard nearly parallel $\G_2$-structure on $X$ inducing the $3$-Sasakian Einstein metric $g^{ts}$, and let $\varphi^{np}$ be the second (strictly) nearly parallel $\G_2$-structure on $X$ inducing the ``squashed'' Einstein metric $g^{np}$.
\begin{itemize}
\item There is a circle of non-trivial deformed $\G_2$-instantons  on $L_0$ for $\varphi^{ts}$.
\item There is a $2$-sphere of non-trivial deformed $\G_2$-instantons on $L_0$ for $\varphi^{np}$.
\end{itemize} 
\end{theorem}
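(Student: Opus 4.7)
The plan is to exploit the abundant symmetry of the 3-Sasakian structure to reduce the nonlinear PDE \eqref{eq:dG2.intro} to an algebraic one. Because $L_0$ is trivial, any unitary connection is of the form $A=d+ia$ for a globally defined real 1-form $a$, with $a\sim a+df$ under $U(1)$-gauge transformations. The 3-Sasakian structure supplies three Reeb vector fields $\xi^1,\xi^2,\xi^3$ generating an $\fsu(2)$-action, dual 1-forms $\eta^1,\eta^2,\eta^3$, and a triple of horizontal 2-forms $\omega^1,\omega^2,\omega^3$ satisfying $d\eta^i=2\omega^i-\epsilon_{ijk}\,\eta^j\wedge\eta^k$ and $\omega^i\wedge\omega^j=2\delta_{ij}\,\mathrm{vol}_H$, where $\mathrm{vol}_H$ denotes the horizontal volume form. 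Both $\varphi^{ts}$, $\varphi^{np}$, and their duals $\psi^{ts}$, $\psi^{np}$, admit closed-form expressions in this coframe; the key structural difference is that $\varphi^{np}$ is built symmetrically in the $\eta^i$ and is therefore $\SO(3)$-invariant, whereas $\varphi^{ts}$---being one of a triple of Sasaki-Einstein nearly parallel $\G_2$-structures associated to the tri-Sasakian metric---singles out one direction in $\mathrm{span}(\eta^1,\eta^2,\eta^3)$ and is only $U(1)$-invariant.

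The natural ansatz to impose is $a=\sum_{i=1}^{3}c_i\,\eta^i$ for constants $(c_1,c_2,c_3)\in\R^3$, giving
\[
F_A \;=\; i\,da \;=\; 2i\sum_i c_i\,\omega^i \;-\; i\sum_{i,j,k}\epsilon_{ijk}\,c_i\,\eta^j\wedge\eta^k .
\]
Distinct $(c_1,c_2,c_3)$ yield non-gauge-equivalent connections, since no nonzero linear combination of the $\eta^i$ is exact. This 3-parameter family is preserved by the $\SU(2)$-action, which rotates $(c_1,c_2,c_3)$ via the standard double cover $\SU(2)\to\SO(3)$, and equation \eqref{eq:dG2.intro} is equivariant under the subgroup of $\SO(3)$ preserving the chosen $\G_2$-structure, so the solution set is an invariant subset of $\R^3$ for this residual symmetry.

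The computational step is to expand $F_A\wedge\psi$ and $F_A^3$ using the quaternionic K\"ahler identities on the horizontal distribution, substitute into \eqref{eq:dG2.intro}, and collect coefficients. For $\varphi^{np}$, full $\SO(3)$-invariance collapses the system to a single polynomial equation in $|c|^2=c_1^2+c_2^2+c_3^2$; a positive root $r_{np}>0$ produces the $S^2=\{|c|=r_{np}\}$ of non-trivial deformed $\G_2$-instantons. For $\varphi^{ts}$, the residual $U(1)$-equivariance yields two independent polynomial equations, one in $|c|^2$ and one in the distinguished component $c_{\mathrm{dist}}$ (say $c_3$); their simultaneous solution is a circle in $\R^3$, giving the advertised $S^1$.

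The principal obstacle is the bookkeeping in expanding $F_A^3$ and $F_A\wedge\psi$: one must carefully track cross-terms between the vertical part $-i\sum\epsilon_{ijk}c_i\,\eta^j\wedge\eta^k$ and the horizontal part $2i\sum c_i\,\omega^i$, and use the precise formula for $\psi$ in each structure, including the scaling coming from the two different Einstein constants. Once the algebraic reduction is clean, non-triviality (that $F_A\neq 0$ and the connection does not pull back from the quaternionic K\"ahler base) is immediate by inspection, and a useful sanity check is that the undeformed $\G_2$-instanton equation \eqref{eq:G2inst.intro} on the same ansatz forces $c=0$, confirming that the cubic term $F_A^3$ is what produces the non-trivial deformed instantons.
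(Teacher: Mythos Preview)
Your proposal is correct and follows essentially the same route as the paper: the ansatz $A=i\sum_j a_j\eta_j$ on $L_0$, computation of $F_A$, $F_A\wedge\psi$ and $F_A^3$ via the structure equations, and reduction of \eqref{eq:dG2.intro} to algebraic equations in $(a_1,a_2,a_3)$, yielding the $S^2$ for $\varphi^{np}$ and the $S^1$ for $\varphi^{ts}$. Your symmetry framing---that $\varphi^{np}=\varphi_{1/\sqrt{5},+1}$ is $\SO(3)$-symmetric in the $\eta_i$ while $\varphi^{ts}=\varphi_{1,-1}$ singles out $\eta_3$---is a helpful conceptual explanation for the shapes of the solution sets that the paper leaves implicit, and your remark that distinct $(a_1,a_2,a_3)$ are gauge-inequivalent (because no nonzero $\sum a_i\eta_i$ is closed, hence not exact) fills in a point the paper does not spell out.
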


\begin{remark}  There are infinitely many compact 
$3$-Sasakian $7$-manifolds $X^7$ \cites{BGM,GWZ}.  Key examples are giving by the $7$-sphere $S^7$ and the Aloff--Wallach space $\SU(3)/\U(1)_{1,1}$ (c.f.~Example \ref{ex:AW}), which is the $\SO(3)$-frame bundle of $\Lambda^2_+\overline{\mathbb{CP}^2}$. 
\end{remark}

\begin{remark} We recall that, if $X^7$ is a compact $3$-Sasakian $7$-manifold, then the metric cone on $(X^7,g^{ts})$ is hyperk\"ahler, and has holonomy $\mathrm{Sp}(2)$ if it is not flat, whilst the metric cone on $(X^7,g^{np})$ has holonomy $\mathrm{Spin}(7)$.
\end{remark}

\begin{remark}
Theorem \ref{thm:np.intro} shows how non-trivial deformed $\G_2$-instantons distinguish between the nearly parallel $\G_2$-structures $\varphi^{np}$ and $\varphi^{ts}$.  We shall also see  that non-trivial deformed $\G_2$-instantons can discriminate between two coclosed $\G_2$-structures inducing the same metric, including the Einstein metrics $g^{ts}$ and $g^{np}$ (c.f.~Corollary \ref{cor:examples.metrics}).  
\end{remark}

We can also apply our results to non-trivial complex line bundles when $X^7$ is the 3-Sasakian Aloff--Wallach space 
(Corollary \ref{cor:AW}).

\begin{theorem}\label{cor:AW_Intro}
	Let $\pi:X\to \overline{\mathbb{CP}^2}$ be the $\SO(3)$-frame bundle of $\Lambda^2_+ \overline{\mathbb{CP}^2}$ and let $k\in\mathbb{Z}$.
	\begin{itemize}
	\item	There is a circle of non-trivial deformed $\G_2$-instantons on $\pi^*\mathcal{O}(k)$ for $\varphi^{ts}$.
\item	There is a $2$-sphere of non-trivial deformed $\G_2$-instantons on $\pi^*\mathcal{O}(k)$ for $\varphi^{np}$.
	\end{itemize}
\end{theorem}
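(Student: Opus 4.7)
The plan is to adapt the symmetry reduction used in the proof of Theorem~\ref{thm:np.intro} to the twisted setting, leveraging that on $X=\SU(3)/\U(1)_{1,1}$ every complex line bundle is of the form $\pi^*\mathcal{O}(k)$ and carries a canonical $\SU(3)$-invariant reference connection.

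First, I would fix the $\U(1)$-connection $A_0^{(k)}$ on $\pi^*\mathcal{O}(k)$ obtained by pulling back the Chern connection of $\mathcal{O}(k)\to\overline{\mathbb{CP}^2}$. Since $\pi$ is $\SU(3)$-equivariant and $\mathcal{O}(k)$ is $\SU(3)$-homogeneous, $A_0^{(k)}$ is itself $\SU(3)$-invariant, and its curvature $F_{A_0^{(k)}}=-ik\,\pi^*\omega_{FS}$ is a horizontal invariant $2$-form on $X$. Expressed in the natural basis of $\SU(3)$-invariant $2$-forms on $X$ supplied by the $3$-Sasakian structure, this curvature has an explicit, $k$-linear form which I would compute directly.

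Next, I would look for deformed $\G_2$-instantons inside the $\SU(3)$-invariant family
\begin{equation*}
A = A_0^{(k)} + i\sum_{j=1}^{3} x_j\,\eta_j,\qquad (x_1,x_2,x_3)\in\mathbb{R}^3,
\end{equation*}
where $\eta_1,\eta_2,\eta_3$ are the Reeb $1$-forms of the $3$-Sasakian structure on $X$. The curvature $F_A=-ik\,\pi^*\omega_{FS}+i\sum_j x_j\,d\eta_j$ is again $\SU(3)$-invariant, so substituting into $\frac{1}{6}F_A^3+F_A\wedge\psi=0$ reduces the PDE to a finite polynomial system in $(x_1,x_2,x_3)$ whose coefficients depend on $k$ and on the choice between $\varphi^{ts}$ and $\varphi^{np}$. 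This is the direct analogue of the algebraic system obtained in the trivial-bundle case, except that the new $k$-dependent term $-ik\,\pi^*\omega_{FS}$ shifts the system in a controlled way.

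Finally, I would exploit the residual symmetry that was already central to Theorem~\ref{thm:np.intro}: under $\varphi^{np}$ the full $\SO(3)$ rotation of the Reeb directions preserves the equation, while under $\varphi^{ts}$ only a $\U(1)\subset\SO(3)$ does so. As a consequence the solution set must be $\SO(3)$- or $\U(1)$-invariant, giving a $2$-sphere respectively a circle once the remaining scalar equation is solved. Non-triviality is automatic because $(x_1,x_2,x_3)$ is forced to be nonzero and because $A_0^{(k)}$ itself does not satisfy \eqref{eq:dG2.intro} for $k\neq 0$. The main obstacle is to verify that the scalar equation produced after the symmetry reduction admits real solutions for \emph{every} $k\in\mathbb{Z}$: the $k$-dependent horizontal term enters through the cubic $\frac{1}{6}F_A^3$ and could in principle obstruct real solvability for large $|k|$. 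I expect this to reduce to an explicit real-root count for a cubic (or lower-degree) polynomial in a single variable, handled directly using the same algebraic computations that underlie Theorem~\ref{thm:np.intro}; the case $k=0$ then recovers the trivial-bundle conclusions.
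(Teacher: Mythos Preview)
Your plan is essentially the paper's own argument: the same ansatz $A=\pi^*A_0^{(k)}+i\sum_j a_j\eta_j$ and the same reduction of \eqref{eq:dG2.intro} to an algebraic system in $(a_1,a_2,a_3)$, with the paper computing directly rather than invoking the $\SO(3)$/$\U(1)$ symmetry you describe. Two corrections are worth making before you carry it out.

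First, your assertion that $\pi^*A_0^{(k)}$ fails to satisfy \eqref{eq:dG2.intro} for $k\neq 0$ is false. The Fubini--Study form on $\overline{\mathbb{CP}^2}$ is \emph{anti-self-dual}, so $\pi^*F_{A_0}\wedge\psi_{t,\varepsilon}=0$ and $\pi^*F_{A_0}^3=0$ for dimensional reasons; hence $\pi^*A_0^{(k)}$ is already a $\G_2$-instanton, and trivially a deformed one, for every $k$. Non-triviality of your solutions comes solely from $(a_1,a_2,a_3)\neq 0$. This anti-self-duality is in fact the key simplification you need and should flag explicitly: it also annihilates the cross term $\pi^*F_{A_0}\wedge(da)^2$ in the expansion of $F_A^3$, because $(da)^2$ is a sum of wedges with the self-dual $\omega_i$. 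The only surviving $k$-dependent contribution is $\pi^*F_{A_0}^2\wedge da=|F_{A_0}|^2\,\pi^*\vol_Z\wedge da$, and here it matters that $|F_{A_0}|^2=2k^2$ is \emph{constant}, which is what allows a solution with constant $a_j$.

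Second, your concern that large $|k|$ might obstruct real solvability is inverted. The $k$-term enters the scalar equation with sign $+k^2$: for $\varepsilon=+1$ one obtains $a_1^2+a_2^2+a_3^2=\tfrac14(1-2t^2+k^2)$, and for the $\varepsilon=-1$ circle $a_1^2+a_2^2=\tfrac14(1+2t^2+k^2)$. Thus increasing $|k|$ only enlarges the radius, and solvability for $\varphi^{np}=\varphi_{1/\sqrt{5},+1}$ and $\varphi^{ts}=\varphi_{1,-1}$ holds for every $k\in\mathbb{Z}$.
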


More generally, we give examples of deformed $\G_2$-instantons for families of coclosed $\G_2$-structures $\varphi_{t,\varepsilon}$ on a compact 3-Sasakian 7-manifold $X$ depending on two parameters: $t>0$ and $\varepsilon\in\{\pm 1\}$.  
Our ansatz depends on $a_1,a_2,a_3\in\mathbb{R}$, where $a_1=a_2=a_3=0$ yields the trivial flat connection in the case of the trivial complex line bundle $L_0$.  Hence, if we let $r=\sqrt{a_1^2+a_2^2+a_3^2}$, which can be viewed as the distance to the trivial connection on $L_0$, we can represent our main result (Proposition \ref{prop:examples}) in Figure \ref{fig:moduli} below.   The overall picture for the non-trivial line bundles on the 3-Sasakian Aloff--Wallach space (Proposition \ref{prop:AW}) is the same.
   \begin{center}
\begin{figure}[ht]
\begin{tikzpicture}[scale=1]
\node[draw] at (1,3.5) {$\varepsilon=+1$};
\draw[->,thick] (0,0) -- (0,4); 
\draw[->,thick] (0,0) -- (4,0);
\draw[red,thick] (0,0)-- (4,0);
\draw[blue,thick] (4*0.707,0) arc(0:90:4*0.707 and 4*0.5);
\draw (4,-0.3) node {$t$};
\draw (-0.2,4) node {$r$};
\draw[blue] (2,1.8) node {$S^2$};
\draw[red] (2,0.2) node {point};
\draw (4*0.707,-0.4) node {$\textstyle\frac{1}{\sqrt{2}}$};
\end{tikzpicture}\qquad\qquad
\begin{tikzpicture}[scale=1]
\node[draw] at (1,3.5) {$\varepsilon=-1$};
\draw[->,thick] (0,0) -- (0,4); 
\draw[->,thick] (0,0) -- (4,0);
\draw[red,thick] (0,0)-- (4,0);
\draw[blue,thick] (4*0.707,0) arc(0:90:4*0.707 and 4*0.5);
\draw[green,thick, variable=\x, domain=0:4] plot ({\x}, {4*1/2*sqrt(1+2*\x/4*\x/4)});
\draw (4,-0.3) node {$t$};
\draw (-0.2,4) node {$r$};
\draw[blue] (2.3,1.8) node {2 points};
\draw[red] (2,0.2) node {point};
\draw[green] (2,2.8) node {$S^1$};
\draw (4*0.707,-0.4) node {$\textstyle\frac{1}{\sqrt{2}}$};
\end{tikzpicture}
\caption{The space of deformed $\G_2$-instantons for $\varphi_{t,\varepsilon}$ on $L_0$.}\label{fig:moduli}
\end{figure}
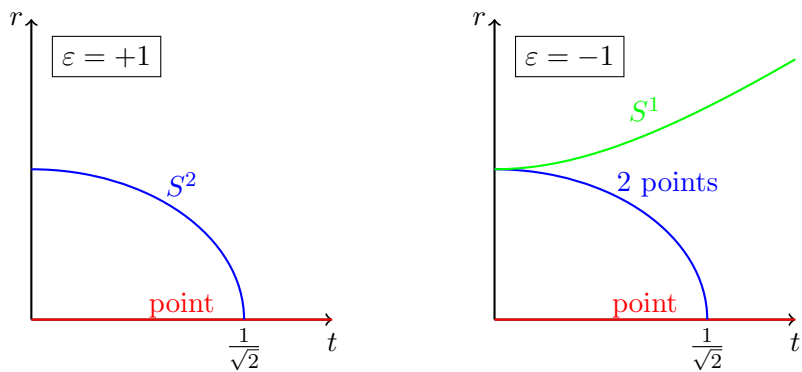
\end{center}
\begin{remark}\label{rem:moduli} In particular, we observe in Figure \ref{fig:moduli} how deformed $\G_2$-instantons die as $t$ varies from $0$ to $1/\sqrt{2}$ in both the cases $\varepsilon=\pm 1$, but when $\varepsilon=-1$ we have a circle that lives for all $t>0$.  For context, we mention that $\varphi^{np}$ corresponds to $(t,\varepsilon)=(1/\sqrt{5},+1)$ and $\varphi^{ts}$ corresponds to $(t,\varepsilon)=(1,-1)$.  We also note that $\varphi_{t,\varepsilon}$ induce the same metric for a fixed value of $t$; it is an interesting feature resulting from our work that the space of deformed $\G_2$-instantons differs for these isometric $\G_2$-structures.
\end{remark}

\begin{remark}
 The parameter $t$ in Figure \ref{fig:moduli} corresponds to the size of the fibres for the canonical $\SU(2)$ or $\SO(3)$ fibration of the 3-Sasakian 7-manifold $X$ over a 4-orbifold.  We see that we have non-trivial deformed $\G_2$-instantons which tend to a non-trivial limit connection as $t\to 0$ (i.e.~as the 3-dimensional fibres collapse), even though the norms of their curvatures are blowing up (with respect to the $t$-dependent metric).  This suggests possible links of our study to adiabatic limits and compactness issues for deformed $\G_2$-instantons.
\end{remark}

We may also relate our examples to the recently developed moduli space theory for deformed $\G_2$-instantons in \cites{KawaiYamamoto}.  We refer the reader to Definition \ref{dfn:moduli} for the formal definition for a deformed $\G_2$-instanton to be obstructed in the sense of deformation theory.

\begin{theorem}  Let $X^7$ be a compact $3$-Sasakian $7$-manifold and let $L_0$ be the trivial complex line bundle on $X$. 
The non-trivial deformed $\G_2$-instantons given by Theorem \ref{thm:np.intro} are obstructed.  Moreover, the moduli spaces of deformed $\G_2$-instantons on $L_0$ for the nearly parallel $\G_2$-structures $\varphi^{np}$ and $\varphi^{ts}$ both contain at least two components of different dimensions.
\end{theorem}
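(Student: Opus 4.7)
The plan is to treat the two assertions in turn, beginning with the existence of components of different dimensions.

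To identify the component of the moduli space containing the trivial connection, I would linearise \eqref{eq:dG2.intro} at $A_0 = 0$ in Coulomb gauge $d^*a = 0$. Since $F_{A_0}=0$, the linearisation is the abelian $\G_2$-instanton equation $da\wedge\psi = 0$, i.e.\ $da\in\Omega^2_{14}$. On a compact nearly parallel $\G_2$-manifold one has $d\varphi = \tau_0 \psi$ for some $\tau_0 \neq 0$, and the identity $*\alpha = -\alpha\wedge\varphi$ for $\alpha \in \Omega^2_{14}$ combined with integration by parts yields
\[
\int_X |da|^2\,\dvol \;=\; -\int_X da\wedge da\wedge\varphi \;=\; -\tau_0 \int_X a\wedge da\wedge\psi \;=\; 0,
\]
because $da\wedge\psi=0$. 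Hence $da = 0$, and since every compact $3$-Sasakian 7-manifold satisfies $b_1 = 0$, Hodge theory and the implicit function theorem together identify the component through $A_0$ as a single point. The non-trivial $S^1$ or $S^2$ components produced by Theorem \ref{thm:np.intro} have non-vanishing curvature and are therefore disjoint from this trivial component; their dimensions $1$ (for $\varphi^{ts}$) and $2$ (for $\varphi^{np}$) differ from $0$, proving the second assertion.

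For the obstructedness claim, the plan is to compare the actual dimension of the non-trivial components with the virtual dimension supplied by the elliptic deformation complex of \cites{KawaiYamamoto}. On a compact 7-manifold and a line bundle the deformation complex is essentially self-dual under the Hodge star, so its index — the virtual dimension of the moduli — vanishes. Since Theorem \ref{thm:np.intro} exhibits smooth families of dimension $1$ or $2$, the space of harmonic infinitesimal deformations at each point of these components must strictly exceed the virtual dimension. Consequently the obstruction space is non-trivial at every such point, which by Definition \ref{dfn:moduli} is exactly the statement that the non-trivial deformed $\G_2$-instantons are obstructed.

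The main obstacle will be establishing the vanishing of the virtual dimension rigorously in the Kawai--Yamamoto framework on $3$-Sasakian 7-manifolds. I would handle this by identifying the formal adjoint structure of the linearised operator acting on $\Omega^1(X;i\R)$ — exploiting that $d\psi = 0$ follows from coclosedness and that the $\SU(2)$-symmetry of the $3$-Sasakian structure decomposes the complex into isotypic pieces — and verifying the self-duality claim on each piece. Once the virtual dimension is confirmed to be zero, the dimension-counting argument immediately delivers both the obstructedness and the presence of components of different dimensions, and no further explicit computation of Kuranishi obstruction maps is needed.
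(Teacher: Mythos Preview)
Your strategy is correct and coincides with the paper's: both arguments rest on the expected dimension of the moduli space being zero, so that any connection sitting in a positive-dimensional family must be obstructed, while the trivial flat connection---being rigid---gives a separate zero-dimensional component.

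The difference is that the ``main obstacle'' you identify is illusory. You propose to show the virtual dimension vanishes via a self-duality property of the deformation complex, to be verified by an $\SU(2)$-isotypic decomposition; but the Kawai--Yamamoto result you already invoke (recorded in the paper as Theorem~\ref{thm:KY}) states directly that the expected dimension equals $b^1(X)$, and you have already noted $b^1(X)=0$ on a compact 3-Sasakian 7-manifold via Myers' theorem. The paper simply combines these two facts and is done---no analysis of adjoints or symmetry decompositions is needed. Your proposed self-duality route is not obviously valid for the \emph{deformed} complex (the second map involves $\tfrac{1}{2}F_A^2+\psi$, which breaks the clean Hodge symmetry one has in the undeformed $\G_2$-instanton case), so pursuing it would likely be harder than necessary and possibly not work as stated.

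On the other hand, your integration-by-parts computation showing rigidity of the trivial flat connection (using $d\varphi=\tau_0\psi$ together with $*\alpha=-\alpha\wedge\varphi$ for $\alpha\in\Omega^2_{14}$) is a clean, self-contained derivation of what the paper merely cites from \cites{KawaiYamamoto} as Lemma~\ref{lem:trivial.moduli}; that part is a genuine improvement in exposition.
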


A Chern--Simons type functional was introduced in \cites{KarigiannisLeung} whose critical points are deformed $\G_2$-instantons.  We study this functional in our setting and, in particular, deduce the following (c.f.~Lemma \ref{lem:trivial.moduli} and Corollary \ref{cor:flat.obs}), which reflects the fact that non-trivial deformed $\G_2$-instantons coalesce with the trivial connection at $t=1/\sqrt{2}$. 

\begin{theorem}\label{thm:flat.obs}
Let $X^7$ be a compact $3$-Sasakian $7$-manifold, let $L_0$ be the trivial complex line bundle, and let $A_0$ be the trivial flat connection on $L_0$.  Then $A_0$ is unobstructed (and hence rigid and locally isolated) as a deformed $\G_2$-instanton with respect to 
$\varphi^{np}=\varphi_{1/\sqrt{5},+1}$ and $\varphi^{ts}=\varphi_{1,-1}$, but obstructed as a deformed $\G_2$-instanton with respect to $\varphi_{1/\sqrt{2},\varepsilon}$ for $\varepsilon=\pm 1$.
\end{theorem}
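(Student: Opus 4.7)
The plan is to carry out the infinitesimal deformation analysis of $A_0$ on the trivial line bundle $L_0$ for each $\G_2$-structure $\varphi_{t,\varepsilon}$, and then to single out the three distinguished values of $t$.

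Writing a nearby $\U(1)$-connection as $A_0+ia$ with $a\in\Omega^1(X;\bR)$, the deformed $\G_2$-instanton equation \eqref{eq:dG2.intro} becomes
\begin{equation*}
da\wedge\psi_{t,\varepsilon}=\tfrac{1}{6}(da)^3,
\end{equation*}
which, together with the gauge action $a\mapsto a+df$, linearises at $A_0$ (where $F_{A_0}=0$) to the single condition $da\wedge\psi_{t,\varepsilon}=0$. Since a compact $3$-Sasakian $7$-manifold satisfies $H^1(X;\bR)=0$, the virtual tangent space to the moduli at $A_0$ is
\begin{equation*}
H^1_{A_0}(t,\varepsilon)=\{\,a\in\Omega^1\co d^{\ast}a=0,\ da\wedge\psi_{t,\varepsilon}=0\,\}.
\end{equation*}

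Next, I would use the $\Sp(1)$-symmetry of the $3$-Sasakian structure and of the family $\varphi_{t,\varepsilon}$ to average and restrict to invariant $1$-forms of the shape $a=\sum_{i=1}^3 a_i\eta_i+\pi^*\alpha$ with $a_i\in\bR$ and $\alpha$ a basic $1$-form. Since each Reeb vector field $\xi_i$ is Killing, the $\eta_i$ are automatically coclosed. The $3$-Sasakian identities $d\eta_i=2\omega_i-2\eta_j\wedge\eta_k$ express $da$ as an explicit combination of the horizontal K\"ahler $2$-forms $\omega_i$ and the vertical $2$-forms $\eta_j\wedge\eta_k$, and wedging with $\psi_{t,\varepsilon}$ yields a linear system in $(a_1,a_2,a_3)$ and $d\alpha$ whose rank depends polynomially on $t$ and $\varepsilon$.

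For $(t,\varepsilon)=(1/\sqrt{5},+1)$ and $(1,-1)$ this system should have trivial kernel, forcing $a_1=a_2=a_3=0$ and $d\alpha=0$; since the quaternion-K\"ahler base orbifold has vanishing first Betti number, $\alpha$ is then exact, so $a=df$ and $H^1_{A_0}=0$. This makes $A_0$ locally isolated and rigid, hence trivially unobstructed. For $(t,\varepsilon)=(1/\sqrt{2},\pm 1)$, the same system should acquire a non-trivial kernel precisely at $t^2=1/2$, which is the threshold at which the non-trivial families from Proposition \ref{prop:examples} coalesce with $A_0$ (see Figure \ref{fig:moduli}). Obstructedness in the sense of Definition \ref{dfn:moduli} then follows because Proposition \ref{prop:examples} shows $A_0$ to be isolated in the actual moduli at $t=1/\sqrt{2}$, so the virtual tangent space strictly exceeds the moduli dimension; equivalently, the second-order term $(da)^3$ contributes a non-trivial quadratic obstruction.

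The main obstacle I expect is the last step: making the Kawai--Yamamoto quadratic obstruction map $H^1_{A_0}\to H^2_{A_0}$ explicit and checking that it is non-zero on the candidate tangent vector at $t=1/\sqrt{2}$, so that the threshold $t^2=\tfrac12$ emerges intrinsically from the interaction between the linear piece $da\wedge\psi$ and the cubic piece $(da)^3$. The cases $\varphi^{np}$ and $\varphi^{ts}$, by contrast, reduce to a routine invariant linear-algebra computation on the finite-dimensional space of $\Sp(1)$-invariant $1$-forms.
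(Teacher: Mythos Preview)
Your treatment of the obstructed case at $t=1/\sqrt{2}$ is essentially the paper's argument: exhibit a nonzero element of $H^1$ within the ansatz \eqref{eq:Connection} (the paper does this via the degenerate Hessian of the Chern--Simons functional, you do it by directly linearising $da\wedge\psi_{t,\varepsilon}=0$; these are the same computation), and then invoke Proposition~\ref{prop:examples} to see that $A_0$ is nevertheless isolated within the ansatz, forcing $H^2\neq 0$.

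The unobstructed case, however, has a genuine gap. Unobstructedness in Definition~\ref{dfn:moduli} means $H^2=0$, i.e.\ surjectivity of $\psi\wedge d:\Omega^1\to d\Omega^5$, and this must be checked on \emph{all} $1$-forms, not only $\Sp(1)$-invariant ones. Your averaging step only probes the invariant part of $H^1$; even if that vanishes, nothing prevents non-invariant infinitesimal deformations from surviving, and in any case the implication ``$H^1=0$ within the ansatz $\Rightarrow$ $H^2=0$'' is not justified (your sentence ``locally isolated and rigid, hence trivially unobstructed'' has the logic reversed). The paper sidesteps this entirely: it quotes the general fact (Lemma~\ref{lem:trivial.moduli}, from Remark~5.13 of \cites{KawaiYamamoto}) that on any compact manifold with a \emph{nearly parallel} $\G_2$-structure the trivial flat connection is unobstructed. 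This is where the special values $(t,\varepsilon)=(1/\sqrt{5},+1)$ and $(1,-1)$ enter: they are precisely the nearly parallel members of the family, and the abstract result applies. A direct computation of $H^2$ for a general coclosed $\varphi_{t,\varepsilon}$ would require substantially more than invariant linear algebra.

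A minor point: the structure equation you wrote has the wrong sign; in this paper's conventions $d\eta_i=-2\omega_i-2\eta_j\wedge\eta_k$.
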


\subsection{Summary}

This article is organized as follows. Section \ref{sec:3Sasakian} introduces some background on $3$-Sasakian geometry which will later be of use in constructing the examples of deformed $\G_2$-instantons. In section \ref{sec:Pullback} we give some simple examples of deformed $\G_2$-instantons which arise from pulling back connections in 6 and 4 dimensions. The non-trivial examples which constitute the main contribution of this article are constructed and presented in section \ref{sec:Examples}. Finally, in section \ref{sec:Chern-Simons}, this article concludes with a discussion of the Chern--Simons type functional mentioned above;
in particular, the functional is explicitly computed and analyzed in some of the cases developed in this article.

\begin{remark}
We shall use summation convention over repeated indices throughout the article.
\end{remark}

\subsection*{Acknowledgments}
The first author was supported by the Simons Collaboration on Special Holonomy in Geometry, Analysis, and Physics (\#724071 Jason Lotay).  The second author was supported by Funda\c{c}\~ao Serrapilheira 1812-27395, by CNPq grants 428959/2018-0 and 307475/2018-2, and FAPERJ through the program Jovem Cientista do Nosso Estado E-26/202.793/2019. The authors thank Conan Leung for suggesting the possible link of this work to adiabatic limits. In addition, the authors would like to thank Paul-Andi Nagy and an anonymous referee for their helpful comments on the article.

\section{3-Sasakian geometry}\label{sec:3Sasakian}

We shall now give a short introduction to some aspects of $3$-Sasakian geometry and its relation to $\G_2$ geometry. We refer the reader to the survey article \cites{Boyer2001} and references therein for more information on $3$-Sasakian geometry. 

\begin{definition}\label{dfn:3Sasakian}
A complete Riemannian $7$-manifold $(X^7, g^{ts})$ is \emph{$3$-Sasakian} if it has three orthonormal Killing vector fields $\lbrace \xi_i \rbrace_{i=1}^3$ satisfying the relations $[\xi_i, \xi_j] = 2\epsilon_{ijk} \xi_k$, for $\epsilon_{ijk}$ the sign of the permutation taking $(1,2,3)$ to $(i,j,k)$, such that each $\xi_i$ induces a Sasakian structure on $(X^7, g^{ts})$.
\end{definition}

As a consequence of this definition, for a $3$-Sasakian $7$-manifold $(X^7, g^{ts})$, the complete metric $g^{ts}$ is Einstein with positive scalar curvature and its metric cone has holonomy contained in $\Sp(2)$. In particular, $X^7$ is compact by Myers' theorem.

\subsection{SU(2) leaf space} For a $3$-Sasakian manifold $(X^7,g^{ts})$ as in Definition \ref{dfn:3Sasakian}, the Killing vector fields $\lbrace \xi_i \rbrace_{i=1}^3$ generate a locally free action of   $\SU(2)$. The leaf space of this $\SU(2)$ action, denoted $Z^4$, can be endowed with a metric $g_Z$ so that the canonical projection 
 \begin{equation}\label{eq:proj.XZ}
 \pi:X \rightarrow Z
 \end{equation}
  is an orbifold Riemannian submersion. This metric is anti-self-dual and Einstein of positive scalar curvature $s>0$.  For convenience, we will scale the metric $g^{ts}$ so that $s=48$: this fits with the canonical example of $S^7$ with its constant curvature 1 metric. In the particular case when $Z$ is spin, we shall regard \eqref{eq:proj.XZ}
as the lift to $\SU(2)=\Spin(3)$ of an $\SO(3)$-(orbi)bundle of frames of $\Lambda^2_+ Z$, the bundle of self-dual 2-forms on $Z$. 

The Levi-Civita connection of $Z$ induces a connection $\eta$ on the bundle \eqref{eq:proj.XZ} which, seen as a $1$-form on $X$ with values in $\mathfrak{su}(2)$, can be written as
\begin{equation}\label{eq:eta.connection}
\eta=\eta_i \otimes T_i ,
\end{equation}
where the $T_i$ are a standard basis of $\mathfrak{su}(2)$ satisfying $[T_i,T_j]=2\epsilon_{ijk}T_k$, and the $\eta_i$ are $1$-forms on $X$. The horizontal space of the connection is $H=\ker (\eta)$. Knowing that $Z$ is anti-self-dual Einstein means that the curvature of the connection $\eta$ in \eqref{eq:eta.connection}  is given by
\begin{equation}\label{eq:Curvature_Eta}
	F_{\eta}= d\eta + \frac{1}{2}[\eta \wedge \eta] = - 2
	\omega_i \otimes T_i,
\end{equation} 
with the $2$-forms $\omega_1,\omega_2,\omega_3$ forming an orthogonal basis of $(\Lambda^2_+ H, g^{ts} \vert_{H})$ with $\vert \omega_i \vert = \sqrt{2}$. 
Notice that we have the relations
\begin{equation}\label{eq:omegas}
\omega_i\wedge\omega_j=2\delta_{ij}\pi^*\vol_Z,
\end{equation}
where $\vol_Z$ is the Riemannian volume form on $(Z,g_Z)$.

\subsection{Metrics and \texorpdfstring{G\textsubscript{2}}{G2}-structures} The $3$-Sasakian metric $g^{ts}$ can be written
\begin{equation}\label{eq:g.ts}
g^{ts} = \eta_i \otimes \eta_i +  
\pi^* g_Z 
\end{equation}
and it is well-known to be Einstein. In fact, there is a second Einstein metric on $X$ for which $\pi$ is a Riemannian submersion. This can be obtained from $g^{ts}$ by  squashing the fibers of $\pi$ by a factor which reduces the length of any $\xi_i$-orbit by $\sqrt{5}$. This yields the metric
\begin{equation}\label{eq:g.np}
	g^{np}=\frac{1}{5}\eta_i\otimes \eta_i + 
	\pi^*g_Z.
\end{equation}

 The metrics $g^{ts}$ in \eqref{eq:g.ts} and $g^{np}$ in \eqref{eq:g.np} are induced by natural distinguished $\G_2$-structures $\varphi^{ts}$ and $\varphi^{np}$ on $X^7$. (For details on $\G_2$-structures we refer the reader to \cite{G2}, for example.) Using \eqref{eq:eta.connection} and \eqref{eq:Curvature_Eta}, it is convenient to consider two  $1$-parameter families, depending on $t\in\mathbb{R}^+$ and $\varepsilon\in\{\pm 1\}$, of $\G_2$-structures on $X^7$ determined by the $3$-forms 
\begin{equation}\label{eq:G2_Structure}
	\varphi_{t,\varepsilon} =\varepsilon t^3\eta_{123}-t 
	(\eta_1\wedge \omega_1+\eta_2\wedge \omega_2+\varepsilon\eta_3\wedge \omega_3),
\end{equation}
where we have used the notation $\eta_{123}=\eta_1\wedge\eta_2\wedge\eta_3$ for brevity. Each $\varphi_{t,\varepsilon}
$ determines the metric
\begin{equation}\label{eq:g.t.nu}
 g_{t}= t^2 \eta_i \otimes \eta_i + 
 \pi^*g_Z, 
\end{equation}
which is \emph{independent} of $\varepsilon$, and the associated $4$-forms $\psi_{t,\varepsilon}=\ast_{g_{t}} \varphi_{t,\varepsilon}$
are 
\begin{equation}\label{eq:psi.t.nu}
\psi_{t,\varepsilon} = 
\pi^*\vol_Z - t^2 
\left( \varepsilon\eta_{23} \wedge \omega_1 +  \varepsilon\eta_{31} \wedge \omega_2+\eta_{12} \wedge \omega_3 \right),
\end{equation}
 where we write $\eta_{ij}=\eta_i\wedge\eta_j$ for short.
Note that if we set
\begin{equation}\label{eq:varphi.ts.np}
\varphi^{ts}= \varphi_{1,-1}\quad\text{and}\quad \varphi^{np}=\varphi_{1/\sqrt{5},+1},
\end{equation}
then \eqref{eq:g.t.nu} shows that $\varphi^{ts}$ induces $g^{ts}$ in \eqref{eq:g.ts} and $\varphi^{np}$ induces $g^{np}$ in \eqref{eq:g.np}.

\begin{remark}
Changing the sign of the parameter $\epsilon$ corresponds to the change $\eta_3\mapsto -\eta_3$, which gives a change of orientation on the vertical space in the projection \eqref{eq:proj.XZ}.  However, since we have fixed the structure equations on $\SU(2)$, we are not free to change $\eta_3\to -\eta_3$, and so $\varepsilon$ represents a genuine parameter. 
\end{remark}

We now recall a notable class of $\G_2$-structures in this setting.

\begin{definition}\label{dfn:np}
A $\G_2$-structure determined by a 3-form $\varphi$, with dual 4-form $\psi$, is \emph{nearly parallel} if $d\varphi=\lambda\psi$ for some $\lambda\in\mathbb{R}\setminus\{0\}$.  By possibly changing orientation, we can always ensure that $\lambda>0$.
\end{definition}
 
Using the equation \eqref{eq:Curvature_Eta} for the curvature of $\eta$, we find the following structure equations:
\begin{align}
	d \eta_i & =  - 2
	\omega_i - 2 \eta_j \wedge \eta_k,\label{eq:structure.eta}\\
	d\omega_i & = 2\omega_j\wedge\eta_k-2\eta_j\wedge\omega_k,\label{eq:structure.omega}
\end{align}
for $(i,j,k)$ a cyclic permutation of $(1,2,3)$. Equations \eqref{eq:structure.eta}-\eqref{eq:structure.omega} immediately show that
\begin{equation}
 d\psi_{t,\varepsilon}=0
\end{equation}
for all $t,\varepsilon$.
Using \eqref{eq:structure.eta}--\eqref{eq:structure.omega} we compute from \eqref{eq:G2_Structure} and \eqref{eq:psi.t.nu} that the equation 
\begin{equation}\label{eq:dphipsi}
d \varphi_{t,\varepsilon} = \lambda \psi_{t,\varepsilon}
\end{equation}
only has  solutions when
\begin{equation}\label{eq:tel.np}
(t,\varepsilon,\lambda)= \left(\frac{1}{\sqrt{5}},+1,\frac{12}{\sqrt{5}}\right) \quad\text{and}\quad (t,\varepsilon,\lambda)=\left(1,-1,4\right).
\end{equation}
  By Definition \ref{dfn:np}, $(t,\varepsilon)=(1/\sqrt{5},+1)$ and $(t,\varepsilon)=(1,-1)$ are the only values for which $\varphi_{t,\varepsilon}$ 
 is  nearly parallel \cites{FKMS1997}.  We see from \eqref{eq:g.ts}, \eqref{eq:g.np} and \eqref{eq:g.t.nu}  that
 $g_{1/\sqrt{5}}=g^{np}$ and $g_1=g^{ts}$, and \eqref{eq:varphi.ts.np}, \eqref{eq:dphipsi} and \eqref{eq:tel.np} show that
 $\varphi^{np}$ and $\varphi^{ts}$ are nearly parallel.

We conclude this section with a familiar concrete example.

\begin{example}\label{ex:7sphere}
The standard 7-sphere $S^7$ with its round constant curvature $1$ metric $g_{S^7}$ is 3-Sasakian, and its $\SU(2)$ leaf space is $Z=S^4$, naturally endowed with its round constant curvature $4$ metric $g_Z$.  It is well-known that $S^7$ admits two homogeneous Einstein metrics:  $g_{S^7}$ (which is $g^{ts}$ in \eqref{eq:g.ts}) and its ``squashed'' metric (which is $g^{np}$ in \eqref{eq:g.np}).   Each of these Einstein metrics is induced by a homogeneous nearly parallel $\G_2$-structure, given in \eqref{eq:varphi.ts.np} by $\varphi^{ts}$ and $\varphi^{np}$ respectively.
\end{example}

\section{Deformed Hermitian-Yang--Mills connections and ASD instantons}\label{sec:Pullback}

In this section we provide examples of deformed $\G_2$-instantons arising from lower-dimensional geometries: specifically, deformed Hermitian-Yang--Mills connections on Calabi--Yau 3-folds and anti-self-dual instantons on anti-self-dual Einstein 4-orbifolds and hypersymplectic 4-manifolds. 

\subsection{Deformed Hermitian-Yang--Mills connections} We recall the following definition, which originated in \cites{LYZ,MMMS}.

\begin{definition}
Let $(Y,\omega)$ be a K\"ahler $n$-fold, where $\omega$ is the K\"ahler form, and let $L$ be a Hermitian complex line bundle on $Y$.  A unitary connection $A$ on $L$ is a \emph{deformed Hermitian-Yang--Mills connection} (with phase $e^{i\alpha}$) if 
\begin{equation}\label{eq:dHYM.1}
F_A^{(0,2)}=0\quad\text{and}\quad \Im(e^{-i\alpha}(\omega+F_A)^n)=0.
\end{equation}
When $Y$ is a Calabi--Yau manifold, deformed Hermitian-Yang--Mills connections  are, in a sense, ``mirror'' to special Lagrangian $n$-folds.
\end{definition}

We are interested in the case where $(Y,\omega,\Omega)$ is a Calabi--Yau 3-fold, with holomorphic volume form $\Omega$, and $A$ is a deformed Hermitian-Yang--Mills connection with phase $1$.  In this case \eqref{eq:dHYM.1} can be rewritten as
\begin{equation}\label{eq:dHYM.2}
F_A\wedge\Im\Omega=0\quad\text{and}\quad \frac{1}{6}F_A^3+F_A\wedge \frac{1}{2}\omega^2=0.
\end{equation}
The analogy between \eqref{eq:dG2.intro} and \eqref{eq:dHYM.2} should be clear.
We then provide an observation which extends Lemma 5.5 in \cites{KawaiYamamoto}.

\begin{lemma}\label{lem:CY3}
Let $(Y,\omega,\Omega)$ be a Calabi--Yau $3$-fold and let $L$ be a Hermitian complex line bundle on $Y$.  Let $\pi:X^7\to Y$ be an $S^1$-bundle over $Y$ with a connection 1-form $\eta$, which is Hermitian-Yang--Mills, endowed with the standard $\G_2$-structure
\begin{equation}\label{eq:product.G2}
\varphi=\eta\wedge\pi^*\omega+\pi^*\Re\Omega\quad\text{and}\quad\psi=\frac{1}{2}\pi^*\omega^2-\eta\wedge\pi^*\Im\Omega.
\end{equation}  
Note that as $\eta$ is Hermitian-Yang--Mills we have $d\psi=0$.

Then $A$ is a deformed Hermitian-Yang--Mills connection with phase $1$ on $L$ if and only if $\pi^*A$ is a deformed $\G_2$-instanton on $\pi^*L$. 
\end{lemma}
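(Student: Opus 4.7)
The plan is to substitute everything into the deformed $\G_2$-instanton equation \eqref{eq:dG2.intro} for $\pi^*A$ and split the result into horizontal and vertical parts with respect to the connection $\eta$.

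First I would observe that, since $\pi^*A$ is the pullback connection on the pullback bundle $\pi^*L$, its curvature is $F_{\pi^*A} = \pi^*F_A$, which is a purely horizontal $2$-form on $X$ (it contains no $\eta$ factor). Plugging this and \eqref{eq:product.G2} into \eqref{eq:dG2.intro}, the deformed $\G_2$-instanton equation for $\pi^*A$ becomes
\begin{equation*}
\frac{1}{6}\pi^*F_A^3 + \pi^*F_A \wedge \Bigl(\tfrac{1}{2}\pi^*\omega^2 - \eta\wedge\pi^*\Im\Omega\Bigr) = 0,
\end{equation*}
which rearranges to
\begin{equation*}
\pi^*\Bigl(\tfrac{1}{6}F_A^3 + \tfrac{1}{2}F_A\wedge \omega^2\Bigr) \;-\; \eta\wedge \pi^*(F_A\wedge \Im\Omega)=0.
\end{equation*}

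The key step is then the horizontal/vertical decomposition: the first summand is the pullback of a $6$-form on $Y$ (hence purely horizontal on $X$), while the second is $\eta$ wedged with a horizontal $6$-form. Since $\eta$ is non-vanishing on the vertical direction and the horizontal distribution $\ker\eta$ is complementary to it, at each point of $X$ the two types of forms are linearly independent inside $\Lambda^7 T^*X$, so the equation holds if and only if each summand vanishes separately. Because $\pi$ is a submersion, the pullback $\pi^*:\Omega^*(Y)\to \Omega^*(X)$ is injective, and the two conditions descend to
\begin{equation*}
F_A\wedge \Im\Omega = 0 \qandq \tfrac{1}{6}F_A^3 + \tfrac{1}{2}F_A\wedge\omega^2 = 0
\end{equation*}
on $Y$, which are exactly the deformed Hermitian-Yang--Mills equations \eqref{eq:dHYM.2} with phase $1$.

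There is essentially no obstacle here: the proof is a type-decomposition argument, and the only point that requires any care is the verification that the assumption that $\eta$ is Hermitian-Yang--Mills (which makes $\psi$ closed and hence makes the $\G_2$-structure coclosed, so that the deformed $\G_2$-instanton equation is the appropriate equation to consider on $X$) is precisely what is needed for the statement to make sense; this is already flagged in the statement of the lemma.
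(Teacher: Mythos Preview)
Your argument is correct and is exactly what the paper intends: its own proof simply records that the result follows immediately from \eqref{eq:dG2.intro}, \eqref{eq:dHYM.2} and \eqref{eq:product.G2}, and you have written out that computation in full. One small slip: the two summands in your displayed equation are $6$-forms, so the linear independence you invoke takes place in $\Lambda^6 T^*X$, not $\Lambda^7 T^*X$.
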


\begin{proof}
The proof follows immediately from \eqref{eq:dG2.intro}, \eqref{eq:dHYM.2} and \eqref{eq:product.G2}, just as in the proof of Lemma 5.5 in \cites{KawaiYamamoto}.
\end{proof}

\begin{remark}
Lemma \ref{lem:CY3} has a well-known analogue where deformed Hermitian-Yang--Mills connections and deformed $\G_2$-instantons are replaced by Hermitian-Yang--Mills conections and $\G_2$-instantons.  When $X^7=S^1\times Y$ with the product $\G_2$-structure and $\eta=d\theta$  for $\theta$ the coordinate on $S^1$, $\G_2$-instantons on $\pi^*L$ are related via a ``broken gauge'' with the pullback of Hermitian-Yang--Mills connections over $Y$ \cites{YWang}. In particular, this implies that $\pi^*A$ is a $\rG_2$-instanton if and only if $A$ is a Hermitian--Yang--Mills connection.
\end{remark}

There are now many examples of deformed Hermitian-Yang--Mills connections, in particular provided by the recent relationship between existence of such connections and stability proved in \cites{GaoChen}.  We may construct a simple example of a $\G_2$-manifold, i.e.~$X^7$ with a torsion-free $\G_2$-structure $\varphi$ (satisfying $d\varphi=0$ and $d\psi=0$), with a non-trivial line bundle admitting a deformed $\G_2$-instanton which is \emph{not} a $\G_2$-instanton as follows.

\begin{example}\label{ex:product}  Suppose that $(Y,\omega,\Omega)$ is a Calabi--Yau 3-fold such that $[\sqrt{3}\omega]$ is an integral class in $H^2(Y)$, and so defines a Hermitian complex line bundle $L$ with a unitary connection $A$ such that $F_A=i\sqrt{3}\omega$.  Then \eqref{eq:dHYM.2} is satisfied and so $A$ is a deformed Hermitian-Yang--Mills connection with phase $1$.

If we let $X=S^1\times Y$ with the product torsion-free $\G_2$-structure as in \eqref{eq:product.G2}, where $\pi:X\to Y$ is the natural projection and $\eta=d\theta$ for $\theta$ the coordinate on $S^1$, then $\pi^*A$ is a deformed $\G_2$-instanton on $\pi^*L$ by Lemma \ref{lem:CY3}.  Notice that 
$\pi^*F_A^3\neq 0$ and so $\pi^*A$ is \emph{not} a $\G_2$-instanton on $X=S^1\times Y$. 
\end{example}

\begin{remark} One can perform a similar construction to Example \ref{ex:product} for so-called Calabi--Yau links $X^7$ in $S^9$, which are nontrivial $S^1$-bundles over Calabi--Yau 3-(orbi)folds $Y$ arising as hypersurfaces in $\mathbb{CP}^4$, to obtain deformed $\G_2$-instantons which are not $\G_2$-instantons on $X$.  The study of $\G_2$-instantons on Calabi--Yau links was initiated in \cites{CYlinks}, using Hermitian-Yang--Mills connections on $Y$.
\end{remark}

\subsection{ASD instantons on anti-self-dual Einstein 4-orbifolds}\label{ss:ASD.1}

Let $X^7$ be a 3-Sasakian 7-manifold as in Definition \ref{dfn:3Sasakian} and let $(Z^4,g_Z)$ as in \eqref{eq:proj.XZ} be the $\SU(2)$ leaf space.  Recall that $(Z^4,g_Z)$ is an anti-self-dual Einstein 4-orbifold, and recall the $\G_2$-structures on $X$ whose 4-forms are given by $\psi_{t,\varepsilon}$ in \eqref{eq:psi.t.nu}.   In particular, recall the forms $\omega_i$ in \eqref{eq:Curvature_Eta}, which are pullbacks of self-dual 2-forms on $Z$, used to construct $\psi_{t,\varepsilon}$.

We now have the following simple observation concerning \emph{anti-self-dual} (ASD) instantons on $Z$, i.e.~connections $A$ on $Z$ whose curvature satisfies
\begin{equation}\label{eq:ASD}
 F_A=-*F_A.
\end{equation}
Since $\pi^*F_A^3=0$ automatically for dimension reasons, for any connection $A$ on $Z$, we see from \eqref{eq:dG2.intro} that the notions of deformed $\G_2$-instanton and $\G_2$-instanton coincide for $\pi^*A$.  We may thus obtain trivial examples of deformed $\G_2$-instantons as follows.

\begin{lemma}\label{lem:ASD.1} Let $X^7$ be a $3$-Sasakian $7$-manifold and let $Z$ be its $\SU(2)$ leaf space as in \eqref{eq:proj.XZ}.  Let $L$ be a Hermitian complex line bundle on $Z$, and let $A$ be a unitary connection on $L$.  

Then  $\pi^*A$ is a (deformed) $\G_2$-instanton on $\pi^*L$ over $X$, with respect to some (and hence all) $\psi_{t,\varepsilon}$ in \eqref{eq:psi.t.nu} if and only if $A$ is an ASD instanton.
\end{lemma}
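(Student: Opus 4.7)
The plan is to reduce the deformed $\G_2$-instanton equation on $\pi^*L$ to a pointwise algebraic condition on $F_A$ involving only the horizontal self-dual 2-forms $\omega_1,\omega_2,\omega_3$, and then identify that condition with the ASD equation \eqref{eq:ASD}. The first step is to use the dimensional observation already noted above the lemma: since $F_A$ is a 2-form on the 4-orbifold $Z$, one has $F_A^3=0$, hence $(\pi^*F_A)^3=0$, so the cubic term in \eqref{eq:dG2.intro} drops out and the deformed $\G_2$-instanton equation for $\pi^*A$ collapses to the $\G_2$-instanton equation $\pi^*F_A\wedge\psi_{t,\varepsilon}=0$.

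The second step is to plug in the formula \eqref{eq:psi.t.nu} for $\psi_{t,\varepsilon}$ and use that $\pi^*F_A$ is horizontal. The term $\pi^*F_A\wedge\pi^*\vol_Z$ vanishes automatically, being the pullback of a 6-form on a 4-orbifold. The remaining term is
\[
-t^{2}\,\pi^{*}F_A\wedge\bigl(\varepsilon\,\eta_{23}\wedge\omega_1+\varepsilon\,\eta_{31}\wedge\omega_2+\eta_{12}\wedge\omega_3\bigr).
\]
Because $\pi^*F_A\wedge\omega_i$ is horizontal, the three summands are linearly independent (their vertical factors $\eta_{23},\eta_{31},\eta_{12}$ are pointwise linearly independent), so the whole expression vanishes if and only if $\pi^*F_A\wedge\omega_i=0$ for each $i=1,2,3$, regardless of $t>0$ and $\varepsilon\in\{\pm1\}$. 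This already establishes the ``some iff all'' part of the statement.

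The final step is the identification with ASD. Using \eqref{eq:omegas}, the forms $\omega_1,\omega_2,\omega_3$ arise as pullbacks (at least locally on $Z$) of a pointwise basis of the rank-3 bundle $\Lambda^2_+Z$ of self-dual 2-forms on $Z$; write $\omega_i=\pi^*\widetilde\omega_i$. On the 4-orbifold, for any 2-form $\alpha$ and self-dual 2-form $\beta$ one has $\alpha\wedge\beta=\langle\alpha^+,\beta\rangle\vol_Z$, so $F_A\wedge\widetilde\omega_i=\langle F_A^{+},\widetilde\omega_i\rangle\vol_Z$. Hence $\pi^*F_A\wedge\omega_i=0$ for all $i$ is equivalent to $F_A^{+}$ being pointwise orthogonal to a basis of $\Lambda^2_+Z$, that is, $F_A^+=0$, which is exactly \eqref{eq:ASD}. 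The converse is immediate: if $F_A$ is ASD then $F_A\wedge\widetilde\omega_i=0$ for each $i$, so $\pi^*F_A\wedge\psi_{t,\varepsilon}=0$ for every $t,\varepsilon$. No step is really an obstacle here; the only mild care is in checking that the local identification $\omega_i=\pi^*\widetilde\omega_i$ is sufficient (it is, since ASD is a pointwise condition and the span of the $\widetilde\omega_i$ pointwise fills $\Lambda^2_+Z$ by \eqref{eq:omegas} and \eqref{eq:Curvature_Eta}).
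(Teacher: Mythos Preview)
Your proof is correct and follows essentially the same approach as the paper's own proof: reduce to $\pi^*F_A\wedge\psi_{t,\varepsilon}=0$, observe this is equivalent to $\pi^*F_A\wedge\omega_i=0$ for all $i$, and then use that the $\omega_i$ span the self-dual 2-forms on the horizontal space to conclude ASD. You simply spell out more explicitly the vanishing of the $\pi^*\vol_Z$ term and the linear-independence argument via the vertical factors $\eta_{23},\eta_{31},\eta_{12}$, which the paper leaves implicit.
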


\begin{proof}
First, let $A$ be an anti-self-dual (ASD) instanton on $Z$.
Then, since $F_A$ is anti-self-dual and the forms $\omega_i$ appearing in \eqref{eq:psi.t.nu} are self-dual on the horizontal space $H$ for the projection \eqref{eq:proj.XZ}, we see that $\pi^*F_A\wedge\psi_{t,\varepsilon}=0$, i.e.~that $\pi^*A$ is a $\G_2$-instanton. 

 Conversely, if $\pi^*F_A\wedge\psi_{t,\varepsilon}=0$ for some $t$ and $\varepsilon$, then we must have
 \begin{equation}\label{eq:wedge}
  \pi^*F_A\wedge\omega_i=0\quad\text{for all $i$.}
 \end{equation}
Since the $\omega_i$ span the self-dual 2-forms on $H$, \eqref{eq:wedge} implies that \eqref{eq:ASD} holds, i.e.~$A$ is an ASD instanton.
\end{proof}

\begin{remark}
 Lemma \ref{lem:ASD.1} has some overlap with Proposition 18 in \cites{BallOliveira}.
\end{remark}

We now give an example of using Lemma \ref{lem:ASD.1} which will be useful later.

\begin{example}\label{ex:AW}
  Let $X^7$ be the Aloff--Wallach space $\SU(3)/\U(1)_{1,1}$ where 
  \begin{equation}\label{eq:AW}
   \U(1)_{1,1}=\left\{\left(\begin{array}{ccc} e^{i\theta} & 0 & 0\\
                        0 & e^{i\theta} & 0\\
                        0 & 0 & e^{-2i\theta}
                            \end{array}
\right)\in\SU(3)\,:\theta\in\mathbb{R}\right\}.
  \end{equation}
Then $X$ is a homogeneous 3-Sasakian 7-manifold whose $\SU(2)$ leaf space is $Z=\overline{\mathbb{CP}^2}$.  Let $L=\mathcal{O}(k)$ on $Z$  for $k\in\mathbb{Z}$.  The  connection $A$ on $L$ with harmonic curvature will be unitary and have the property that $F_A$ is a multiple of the Fubini--Study form, and so will be an ASD instanton (since $\overline{\mathbb{CP}^2}$ has the opposite orientation).  Moreover, $A$ will be non-trivial whenever $k\neq 0$.  Lemma \ref{lem:ASD.1} then gives a deformed $\G_2$-instanton (which is a non-trivial $\G_2$-instanton for $k\neq 0$) on $\pi^*L$ with respect to both of the homogeneous nearly parallel $\G_2$-structures $\varphi^{ts}$ and $\varphi^{np}$ in \eqref{eq:varphi.ts.np} on $X$.  
\end{example}

\begin{remark}
Gauge theory on $X=\SU(3)/\U(1)_{1,1}$, in particular $\G_2$-instantons with respect to the two homogeneous nearly parallel $\G_2$-structures, is studied in some detail in \cites{BallOliveira}.
\end{remark}

\subsection{ASD instantons on hypersymplectic 4-manifolds}

We shall now give some simple examples arising from pull-backs of anti-self-dual connections on a hypersymplectic $4$-dimensional manifold.  As we shall see, this is directly analogous to the construction arising from anti-self-dual Einstein 4-orbifolds considered in the previous subsection. 

\begin{definition}\label{dfn:hyp}
A \emph{hypersymplectic} structure on an oriented $4$-manifold $Z^4$ is a triple of closed $2$-forms $(\omega_1,\omega_2,\omega_3)$ on $Z$ so that, for a volume form $\vol_Z$ on $Z$,
\begin{equation}\label{eq:hyp}
 \omega_i \wedge \omega_j = 2Q_{ij} \vol_Z,
\end{equation}
where the matrix $Q_{ij}$ is positive definite.  We may choose $\vol_Z$ in \eqref{eq:hyp} so that the matrix $Q_{ij}$ has determinant one.  In this manner, the hypersymplectic structure determines a Riemannian metric $g_Z$ on $Z$ whose bundle of self-dual two forms is spanned by $\lbrace \omega_1 , \omega_2 , \omega_3 \rbrace$ and whose Riemannian volume form is $\vol_Z$.
\end{definition}

\begin{example} If $(Z^4,g_Z)$ is a K3 surface with $(\omega_1,\omega_2,\omega_3)$ a hyperk\"ahler triple, then \eqref{eq:hyp} is satisfied with $Q_{ij}=\delta_{ij}$ and $g_Z$ in Definition \ref{dfn:hyp} is the associated Ricci-flat K\"ahler metric. See \cites{Donaldson2006} for more about hypersympletic structures and their relation to kyperk\"ahler geometry.
\end{example}

Consider a $3$-torus bundle $\pi:X \to Z$ over such a hypersymplectic $4$-manifold $Z$ with an anti-self-dual connection $\eta$. We regard the connection $\eta =(\eta_1,\eta_2,\eta_3) \in \Omega^1(X, \mathbb{R}^3)$ as three $1$-forms on the total space whose curvatures $d \eta_i$ are the pullback of anti-self-dual $2$-forms on $(Z,g_Z)$. Then, we consider $\G_2$-structures on $X$ whose corresponding $4$-forms are
\begin{equation}\label{eq:hyp.psi}
 \psi_t= \pi^*\vol_Z - t^2 \left( \eta_{23} \wedge \pi^*\omega_1 + \eta_{31} \wedge \pi^*\omega_2 + \eta_{12} \wedge \pi^*\omega_3 \right) ,
\end{equation}
for some constant $t>0$. (For more details on the relation between $\G_2$-structures and hypersympletic structures see \cites
{Fine2018}.) We see that $d\psi_t=0$ as the $d\eta_i$ are assumed to be the pull-back of anti-self-dual $2$-forms and thus $d\eta_i \wedge \omega_j=0$ for all $i,j$.  This is the only point for which the condition that $\eta$ is anti-self-dual is used.

We now construct some simple deformed $\G_2$-instantons with respect to $\psi_t$ in \eqref{eq:hyp.psi}. We regard these as being trivial examples since, as in Lemma \ref{lem:ASD.1} above, they are $\G_2$-instantons for which the cubic term in the curvature vanishes.  The proof is almost identical to Lemma \ref{lem:ASD.1} so we omit it.

\begin{lemma}\label{lem:ASD.2}
Let $Z$ be a hypersymplectic $4$-manifold, let $L$ be a Hermitian complex line bundle on $Z$ and let $A$ be a unitary connection on $L$.   Let $\pi:X\to Z$ be a $T^3$-bundle over $Z$ with anti-self-dual connection $\eta$ and let $\psi_t$ be as in \eqref{eq:hyp.psi}.  

Then $\pi^*A$ is a (deformed) $\G_2$-instanton on $\pi^*L$ with respect to some (and hence all) $\psi_t$ if and only if $A$ is an ASD instanton.
\end{lemma}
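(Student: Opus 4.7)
The plan is to mimic the proof of Lemma \ref{lem:ASD.1}, but replacing the role of the connection forms on the $\SU(2)$-bundle with the flat vertical $1$-forms $\eta_1,\eta_2,\eta_3$ of the $T^3$-bundle, and the role of the anti-self-dual Einstein base with the hypersymplectic base $(Z,\omega_1,\omega_2,\omega_3)$. The dimensional/algebraic reductions are essentially the same.

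First, I would observe that $\pi^*F_A$ is a horizontal $2$-form, i.e.\ the pullback of a $2$-form on the $4$-dimensional base $Z$. Therefore
\begin{equation*}
(\pi^*F_A)^3 = \pi^*(F_A^3) = 0
\end{equation*}
simply because $F_A^3$ is a $6$-form on the $4$-manifold $Z$. Consequently the deformed $\G_2$-instanton equation \eqref{eq:dG2.intro} for $\pi^*A$ with respect to $\psi_t$ collapses to the $\G_2$-instanton equation
\begin{equation*}
\pi^*F_A\wedge\psi_t = 0.
\end{equation*}

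Next, I would expand this wedge product using \eqref{eq:hyp.psi}. The term $\pi^*F_A\wedge \pi^*\vol_Z$ vanishes for the same dimension reason as above (it would be the pullback of a $6$-form on $Z^4$), so only the three terms involving the $\eta_{ij}$ survive:
\begin{equation*}
\pi^*F_A\wedge\psi_t = -t^2\bigl(\eta_{23}\wedge\pi^*(F_A\wedge\omega_1) + \eta_{31}\wedge\pi^*(F_A\wedge\omega_2) + \eta_{12}\wedge\pi^*(F_A\wedge\omega_3)\bigr).
\end{equation*}
Since the three $2$-forms $\eta_{23},\eta_{31},\eta_{12}$ are pointwise linearly independent on $X$ (they form a basis of the vertical $\Lambda^2$), and each $\pi^*(F_A\wedge\omega_i)$ is a horizontal $4$-form, the vanishing of $\pi^*F_A\wedge\psi_t$ is equivalent to
\begin{equation*}
F_A\wedge\omega_i = 0\quad\text{for all } i=1,2,3.
\end{equation*}
In particular the choice of $t>0$ plays no role, confirming the ``some if and only if all'' statement.

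Finally, I would translate the condition $F_A\wedge\omega_i=0$ for $i=1,2,3$ into anti-self-duality. By Definition \ref{dfn:hyp}, $\{\omega_1,\omega_2,\omega_3\}$ spans the bundle $\Lambda^2_+ Z$ of self-dual $2$-forms of the induced metric $g_Z$. For any $2$-form $\beta$ on the oriented Riemannian $4$-manifold $(Z,g_Z)$ with self-dual/anti-self-dual decomposition $\beta=\beta^++\beta^-$, one has $\beta\wedge\omega_i = \langle\beta^+,\omega_i\rangle\,\vol_Z$, because $\omega_i$ is self-dual and self-dual and anti-self-dual forms are pointwise orthogonal. Applying this to $\beta=F_A$, the vanishing $F_A\wedge\omega_i=0$ for all $i$ forces $\langle F_A^+,\omega_i\rangle=0$ for all $i$, hence $F_A^+=0$, i.e.\ $A$ is ASD. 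The converse implication is immediate: if $F_A=-*F_A$ then $F_A\wedge\omega_i=0$ for each self-dual $\omega_i$, so $\pi^*F_A\wedge\psi_t=0$.

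There is no real obstacle: the argument is a direct computation combined with the defining property of the $\omega_i$ in Definition \ref{dfn:hyp}, exactly parallel to Lemma \ref{lem:ASD.1}, with the only cosmetic differences being that the vertical $1$-forms come from a flat $T^3$-fibration rather than an $\SU(2)$-fibration, and that the ``span the self-dual $2$-forms'' property holds by definition of the hypersymplectic structure rather than as a consequence of the $3$-Sasakian geometry.
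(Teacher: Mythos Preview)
Your proof is correct and follows exactly the approach the paper intends: the paper in fact omits the proof, saying only that it is almost identical to that of Lemma \ref{lem:ASD.1}. One small terminological slip worth fixing: the connection $\eta$ on the $T^3$-bundle is anti-self-dual, not flat (flatness only appears in the special case of Example \ref{ex:K3}), but your argument nowhere uses flatness of $\eta$, so this does not affect the mathematics.
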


\begin{example}\label{ex:K3}
	Let $Z$ be a K3 surface and $\omega_1,\omega_2,\omega_3$ a hyperk\"ahler triple, and let $X=T^3 \times Z$ so that $\eta$ is the trivial flat connection on $\pi:X\to Z$. Note that in this case $X$ is a $\G_2$-manifold with the product $\G_2$-structure whose 4-form is $\psi=\psi_1$ in \eqref{eq:hyp.psi}.
	
	Denote by $\cH_+= \langle \omega_1, \omega_2,\omega_3 \rangle$ the space of self-dual harmonic $2$-forms and by $\cH_-$ that of anti-self-dual ones. Then $H^2(X,\mathbb{R}) \cong \cH_+ \oplus \cH_-$ and suppose that $\cH_- \cap H^2(X, \mathbb{Z}) \neq 0$. Then, for any complex line bundle so that $c_1(L) \in \cH_- \cap H^2(X, \mathbb{Z})$, Lemma \ref{lem:ASD.2} applies and we obtain a connection $A$ which is both a $\G_2$-instanton and a deformed $\G_2$-instanton for $\psi$ on the $\G_2$-manifold $X=T^3\times Z$.
\end{example}
 
\begin{remark} Similar constructions of $\G_2$-instantons to Example \ref{ex:K3}, also performed in the higher rank case, can be found in \cites{Clarke2020}.
\end{remark}

\section{Examples}\label{sec:Examples}

We now turn to the main goal of this article, which is to construct the first non-trivial examples of deformed $\G_2$-instantons.  At the end of the section, we shall also discuss implications of this construction for the deformation theory of deformed $\G_2$-instantons.

In this section, unless we state otherwise, we let $X^7$ be a 3-Sasakian 7-manifold as in Definition \ref{dfn:3Sasakian}, and we shall use the notation introduced in section \ref{sec:3Sasakian}.  In particular we let $Z^4$ be the leaf space of the $\SU(2)$ action on $X$ given in \eqref{eq:proj.XZ}. We also let $L_0$ denote the trivial complex line bundle over $X$.

\subsection{\texorpdfstring{G\textsubscript{2}}{G2}-instantons}\label{ss:Examples_TRivial_Line_Bundle}

We consider connections $A$ on the trivial complex line bundle 
$L_0$
 over $X$, given by
\begin{equation}\label{eq:Connection}
A=i(a_1 \eta_1 +a_2 \eta_2+a_3 \eta_3),
\end{equation}
for $a_1,a_2,a_3 \in \mathbb{R}$, where the $\eta_i$ are given in \eqref{eq:eta.connection}.  (Here, we identify the Lie algebra $\mathfrak{u}(1)$ with $i\mathbb{R}$.) The curvature of $A$ is given by
\begin{equation}\label{eq:FA} F_A= - 2ia_1 \left(
\omega_1 +  \eta_{23} \right) - 2ia_2 \left(  
\omega_2 +  \eta_{31} \right) - 2ia_3 \left(  
\omega_3 +  \eta_{12}\right),
 \end{equation}
where the $\omega_i$ are given in \eqref{eq:Curvature_Eta}, and we recall that $\eta_{ij}=\eta_i\wedge\eta_j$.  Using \eqref{eq:omegas}, \eqref{eq:psi.t.nu} and \eqref{eq:FA},  we compute
\begin{align}\label{eq:F_Wedge_Psi}
	F_A \wedge \psi_{t,\varepsilon} 
		= - 2i \big((1-2\varepsilon t^2) \left( a_1 \eta_{23} + a_2 \eta_{31}\right) + (1-2t^2)a_3 \eta_{12} \big) \wedge \pi^*\vol_Z.
\end{align}
From \eqref{eq:G2inst.intro} and \eqref{eq:F_Wedge_Psi} we are led to the following conclusion.

\begin{proposition}\label{prop:G2_Inst_Trivial}
	Suppose that $A$ in \eqref{eq:Connection} is a $\G_2$-instanton with respect to $\psi_{t,\varepsilon}$ in \eqref{eq:psi.t.nu}. Then  
	\begin{itemize}
		\item $a_1=a_2=a_3=0$, in which case $A$ is the trivial flat connection, or
		\item $t=1/\sqrt{2}$, $\varepsilon=-1$, $a_1=a_2=0$ and $a_3\neq 0$, so $A=ia_3\eta_3$ is a $\G_2$-instanton with respect to $\psi_{1/\sqrt{2},-1}$, or
		\item $t=1/\sqrt{2}$ and $\varepsilon=+1$,   
		in which case all $A$ in \eqref{eq:Connection} are $\G_2$-instantons with respect to $\psi_{1/\sqrt{2},+1}$.
			\end{itemize}
\end{proposition}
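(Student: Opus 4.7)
The plan is to read off the statement directly from the explicit formula \eqref{eq:F_Wedge_Psi} for $F_A\wedge\psi_{t,\varepsilon}$, via a short linear-independence argument followed by case analysis on the two scalar factors $1-2\varepsilon t^2$ and $1-2t^2$.

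First, I would observe that on $X$ the three 6-forms $\eta_{23}\wedge\pi^*\vol_Z$, $\eta_{31}\wedge\pi^*\vol_Z$, $\eta_{12}\wedge\pi^*\vol_Z$ are pointwise linearly independent, since $\pi^*\vol_Z$ generates the top horizontal exterior power for the Riemannian submersion $\pi\colon X\to Z$, while $\eta_{23}$, $\eta_{31}$, $\eta_{12}$ span the vertical 2-forms for the $\SU(2)$-action. Consequently, in view of \eqref{eq:F_Wedge_Psi}, the $\G_2$-instanton condition $F_A\wedge\psi_{t,\varepsilon}=0$ is equivalent to the three scalar equations
\begin{equation*}
(1-2\varepsilon t^2)\,a_1=0,\qquad (1-2\varepsilon t^2)\,a_2=0,\qquad (1-2t^2)\,a_3=0.
\end{equation*}

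Next I would split into cases according to whether each of $1-2\varepsilon t^2$ and $1-2t^2$ vanishes, using $t>0$ and $\varepsilon\in\{\pm1\}$. If $1-2\varepsilon t^2\neq 0$ and $1-2t^2\neq 0$ we immediately get $a_1=a_2=a_3=0$, giving the trivial flat connection. The condition $1-2\varepsilon t^2=0$ together with $t>0$ forces $\varepsilon=+1$ and $t=1/\sqrt{2}$, which simultaneously makes $1-2t^2=0$; hence the case $1-2\varepsilon t^2=0$, $1-2t^2\neq 0$ cannot occur, which is the one small point that needs to be checked carefully. The case $1-2\varepsilon t^2\neq 0$ and $1-2t^2=0$ then forces $t=1/\sqrt{2}$ and $\varepsilon=-1$, giving $a_1=a_2=0$ with $a_3$ arbitrary; if $a_3\neq 0$ this is the second bullet, and if $a_3=0$ we are back in the trivial case. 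Finally, if both factors vanish we must have $t=1/\sqrt{2}$ and $\varepsilon=+1$, in which case all three equations are automatic and every connection of the form \eqref{eq:Connection} is a $\G_2$-instanton, giving the third bullet.

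The argument is essentially a verification, so there is no serious obstacle; the only place one has to be a little careful is in ruling out the case $1-2\varepsilon t^2=0$, $1-2t^2\neq 0$, which uses the positivity of $t^2$ to pin down $\varepsilon=+1$ and hence force the second factor to also vanish. Once this is noted, the three bullets of the proposition follow directly from the case analysis above.
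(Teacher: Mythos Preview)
Your proposal is correct and follows essentially the same approach as the paper: the paper simply states that the proposition follows from \eqref{eq:G2inst.intro} and the explicit formula \eqref{eq:F_Wedge_Psi}, and your argument makes this deduction explicit via the linear independence of the 6-forms and the case analysis on the factors $1-2\varepsilon t^2$ and $1-2t^2$.
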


\begin{remark}
	The $\G_2$-structures given by $(t,\varepsilon)=(1/\sqrt{2},+1)$ are special, as they support a real $3$-parameter family of $\G_2$-instantons on the trivial complex line bundle $L_0$ on $X$ by Proposition \ref{prop:G2_Inst_Trivial}. This extends observations made in Proposition 17 of \cites{BallOliveira}.
	Moreover, the $\G_2$-structures given by $(t,\varepsilon)=(1/\sqrt{2},-1)$ admit a real $1$-parameter family of $\G_2$-instantons on $L_0$ on $X$.
\end{remark}

\begin{remark} There are examples of $\G_2$-instantons on higher rank bundles on $X$, such as
 Example 2 in \cites{Clarke2020B}, which describes an irreducible $\G_2$-instanton with gauge group $\SO(3)$ in this setting.
\end{remark}

\subsection{Deformed \texorpdfstring{G\textsubscript{2}}{G2}-instantons}

We now analyze solutions to the deformed $\G_2$-instanton (or, equivalently, deformed Donaldson--Thomas connection) equation with respect to $\psi_{t,\varepsilon}$:
\begin{equation}\label{eq:dG2}
\frac{1}{6}F_A^3+F_A \wedge \psi_{t,\varepsilon} 
=0,
\end{equation}
for $A$ a connection on the trivial complex line bundle $L_0$ on $X$. For $A$ as in \eqref{eq:Connection} we compute
\begin{equation}\label{eq:Cubic_Term}
\frac{1}{6}F_A^3=
8i \left( a_1^2+a_2^2+a_3^2 \right) \left( a_1 \eta_{23} + a_2 \eta_{31} + a_3 \eta_{12} \right) \wedge \pi^*\vol_Z,
\end{equation}
Using \eqref{eq:F_Wedge_Psi} and \eqref{eq:Cubic_Term}, we see that $A$ solves \eqref{eq:dG2} if and only if 
\begin{equation}\label{eq:dG2.L0}
\big(4(a_1^2+a_2^2+a_3^2)-(1-2\varepsilon t^2)\big)(a_1\eta_{23}+a_2\eta_{31})+\big(4(a_1^2+a_2^2+a_3^2)-(1-2t^2)\big)a_3\eta_{12}=0.
\end{equation}
We see that \eqref{eq:dG2.L0} always has the solution $a_1=a_2=a_3=0$, which corresponds to the flat connection.  Otherwise, if $\varepsilon=+1$ then we must have
\begin{equation}\label{eq:constraint}
  a_1^2+a_2^2+a_3^2  = \frac{1}{4} \left(1- 2t^2 \right) .
  \end{equation}
We immediately see that \eqref{eq:constraint} can be solved for a non-flat connection $A$ if and only if $2t^2 < 1$, in which case there is a whole $2$-sphere of solutions.  If, instead, $\varepsilon=-1$ then if $a_1^2+a_2^2\neq 0$ we must have
\begin{equation}\label{eq:constraint.2}
 a_1^2+a_2^2=\frac{1}{4}(1+2t^2)\quad\text{and}\quad a_3=0,
\end{equation}
and if $a_3\neq 0$ we must have
\begin{equation}\label{eq:constraint.3}
 a_1^2+a_2^2=0\quad\text{and}\quad a_3^2=\frac{1}{4}(1-2t^2).
\end{equation}
Clearly, \eqref{eq:constraint.2} gives a circle of solutions for any $t>0$, whereas \eqref{eq:constraint.3} gives non-trivial solutions if and only if $2t^2<1$, in which case there are two solutions.

We state these findings as follows.

\begin{proposition} \label{prop:examples}  Let $a_1,a_2,a_3\in\mathbb{R}$ and let $A=ia_j\eta_j$ as in \eqref{eq:Connection} be a connection on the trivial complex line bundle $L_0$ on $X$.  Then $A$ is a deformed $\G_2$-instanton with respect to $\psi_{t,\varepsilon}$ in \eqref{eq:psi.t.nu} if and only if either $a_1=a_2=a_3=0$, so $A$ is the trivial flat connection, or one of the following holds:
\begin{itemize}
\item $t\in(0,1/\sqrt{2})$, $\varepsilon=+1$ and $a_1,a_2,a_3$ satisfy \eqref{eq:constraint} (so there is a $2$-sphere of solutions);
\item $t\in(0,1/\sqrt{2})$, $\varepsilon=-1$ and $a_1,a_2,a_3$ satisfy \eqref{eq:constraint.2} or \eqref{eq:constraint.3} (so the solutions consist of a circle and two points); 
\item $t\geq 1/\sqrt{2}$, $\varepsilon=-1$, and $a_1,a_2,a_3$ satisfy \eqref{eq:constraint.2} (so there is a circle of solutions).
\end{itemize}
\end{proposition}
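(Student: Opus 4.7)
The plan is to reduce the statement to a system of algebraic equations in $a_1,a_2,a_3$ by directly substituting the ansatz $A=ia_j\eta_j$ into the deformed $\G_2$-instanton equation \eqref{eq:dG2.intro} with $\psi=\psi_{t,\varepsilon}$, and then solving it by elementary case analysis on the sign $\varepsilon\in\{\pm 1\}$. Since $L_0$ is a complex line bundle, all curvature forms commute, so the two terms $F_A\wedge\psi_{t,\varepsilon}$ and $\tfrac{1}{6}F_A^3$ can be computed independently and then combined.

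First I would compute $F_A=-2ia_i(\omega_i+\eta_{jk})$ (with cyclic $(i,j,k)$) using the structure equation \eqref{eq:structure.eta}, which gives the expression already recorded in \eqref{eq:FA}. Next I would wedge this against the formula \eqref{eq:psi.t.nu} for $\psi_{t,\varepsilon}$, using the orthogonality relations \eqref{eq:omegas} for the $\omega_i$ and the fact that $\eta_{ij}\wedge\eta_k=\eta_{123}$ for cyclic $(i,j,k)$, to obtain the expression \eqref{eq:F_Wedge_Psi}. The key point here is that the $(1-2\varepsilon t^2)$ and $(1-2t^2)$ coefficients arise because the $\eta_3$ component of $\varphi_{t,\varepsilon}$ (and hence the $\omega_3$ component of $\psi_{t,\varepsilon}$) carries a factor of $\varepsilon$ that does not appear for the other two indices. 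Separately I would cube $F_A$: all terms of the form $\omega_i\wedge\omega_j\wedge\omega_k$, $\eta_{jk}\wedge\eta_{lm}\wedge\eta_{np}$ vanish by \eqref{eq:omegas} and by degree reasons in $\eta_{123}$, while the surviving mixed terms assemble to \eqref{eq:Cubic_Term} after using $\omega_i\wedge\omega_j=2\delta_{ij}\pi^*\vol_Z$ again.

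Adding the two contributions, equation \eqref{eq:dG2.intro} becomes exactly \eqref{eq:dG2.L0}, a system of three polynomial equations on $\mathbb{R}^3$ obtained by setting the $\eta_{23}$, $\eta_{31}$, $\eta_{12}$ components equal to zero. From here the proof is algebraic: if $\varepsilon=+1$, the $\eta_{23}$ and $\eta_{31}$ equations carry the same bracket $4(a_1^2+a_2^2+a_3^2)-(1-2t^2)$ as the $\eta_{12}$ equation, so the non-flat solutions form the $2$-sphere \eqref{eq:constraint}, which is non-empty precisely when $t<1/\sqrt{2}$. If $\varepsilon=-1$, the two brackets differ, so I would split on whether $a_3=0$ or $(a_1,a_2)=(0,0)$: the former gives the circle \eqref{eq:constraint.2} (always non-empty), while the latter gives the pair of points \eqref{eq:constraint.3} (non-empty precisely when $t<1/\sqrt{2}$); the case with both $a_3\neq 0$ and $(a_1,a_2)\neq(0,0)$ is ruled out because the two brackets cannot vanish simultaneously for positive $t$.

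I do not anticipate a serious obstacle here, as the argument is almost entirely bookkeeping once the structure equations \eqref{eq:structure.eta}--\eqref{eq:structure.omega} and the orthogonality \eqref{eq:omegas} are in hand. The only subtle point is the final case split for $\varepsilon=-1$: one must verify that the intersection of the two solution branches \eqref{eq:constraint.2} and \eqref{eq:constraint.3} is empty (which holds because $\tfrac{1}{4}(1+2t^2)>0$ forces $a_1^2+a_2^2>0$, while branch \eqref{eq:constraint.3} requires $a_1^2+a_2^2=0$), and to confirm that no further solutions are hidden in the generic case where both brackets are non-zero. This last observation is what separates the $\varepsilon=-1$ moduli space from the $\varepsilon=+1$ one and underlies the dichotomy pictured in Figure \ref{fig:moduli}.
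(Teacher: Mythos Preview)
Your proposal is correct and follows essentially the same route as the paper: compute $F_A$ from the structure equations, obtain \eqref{eq:F_Wedge_Psi} and \eqref{eq:Cubic_Term} separately, combine them into \eqref{eq:dG2.L0}, and then solve by case analysis on $\varepsilon$. Your treatment of the $\varepsilon=-1$ case is in fact slightly more explicit than the paper's, since you spell out why the two brackets cannot vanish simultaneously for $t>0$.
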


By \eqref{eq:varphi.ts.np}, Proposition \ref{prop:examples} immediately gives the following two results.

\begin{corollary}\label{cor:examples.np}  Recall the two nearly parallel $\G_2$-structures $\varphi^{np}$ and $\varphi^{ts}$ on $X$ given in \eqref{eq:varphi.ts.np}.
\begin{itemize}
 \item There is a $2$-sphere of non-trivial deformed $\G_2$-instantons on $L_0$ over $X$ with respect to $\varphi^{np}$ arising from \eqref{eq:Connection}.
 \item  There is a circle of non-trivial deformed $\G_2$-instantons on $L_0$ over $X$ with respect to $\varphi^{ts}$ arising from \eqref{eq:Connection}.
\end{itemize}
\end{corollary}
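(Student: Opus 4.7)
The plan is to deduce the corollary as an immediate specialization of Proposition \ref{prop:examples} together with the identification \eqref{eq:varphi.ts.np} of the two nearly parallel $\G_2$-structures as members of the family $\varphi_{t,\varepsilon}$. Since Proposition \ref{prop:examples} already exhausts all connections on $L_0$ of the form \eqref{eq:Connection} solving the deformed $\G_2$-instanton equation \eqref{eq:dG2} with respect to $\psi_{t,\varepsilon}$, all that remains is to substitute the relevant values of $(t,\varepsilon)$ and read off the resulting solution set.

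First I would treat the nearly parallel $\G_2$-structure $\varphi^{np} = \varphi_{1/\sqrt{5},+1}$. Here $t = 1/\sqrt{5}$ so $2t^2 = 2/5 < 1$, placing us in the first bullet of Proposition \ref{prop:examples}. The constraint \eqref{eq:constraint} reduces to
\begin{equation*}
a_1^2 + a_2^2 + a_3^2 = \frac{1}{4}\left(1 - \frac{2}{5}\right) = \frac{3}{20},
\end{equation*}
which cuts out a $2$-sphere of radius $\sqrt{3/20}$ in the space of triples $(a_1,a_2,a_3)$, and each such triple gives via \eqref{eq:Connection} a non-trivial connection (since not all $a_i$ vanish).

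Next I would handle $\varphi^{ts} = \varphi_{1,-1}$. Here $t=1$ so $2t^2 = 2 > 1$, placing us in the third bullet of Proposition \ref{prop:examples}. The constraint \eqref{eq:constraint.2} becomes
\begin{equation*}
a_1^2 + a_2^2 = \frac{1}{4}(1 + 2) = \frac{3}{4}, \qquad a_3 = 0,
\end{equation*}
which is a circle of radius $\sqrt{3}/2$ in the $(a_1,a_2)$-plane, again giving non-trivial connections.

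There is essentially no obstacle beyond bookkeeping: Proposition \ref{prop:examples} does all the real work. The only points to double-check are that $1/\sqrt{5} < 1/\sqrt{2}$ (so $\varphi^{np}$ lands in the $t \in (0,1/\sqrt{2})$ regime) and that $1 \ge 1/\sqrt{2}$ (so $\varphi^{ts}$ lands in the $t \ge 1/\sqrt{2}$ regime with $\varepsilon=-1$), both of which are obvious. Note in passing that for $\varphi^{ts}$ the alternative branch \eqref{eq:constraint.3} is unavailable since $2t^2 = 2 > 1$, confirming that the circle \eqref{eq:constraint.2} is the only family of non-trivial solutions.
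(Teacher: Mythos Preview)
Your proposal is correct and follows precisely the paper's approach: the paper simply states that the corollary follows immediately from Proposition \ref{prop:examples} via the identification \eqref{eq:varphi.ts.np}, and you have spelled out that deduction in detail.
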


\begin{remark}
 Corollary \ref{cor:examples.np} demonstrates how Proposition \ref{prop:examples} can be used to show that deformed $\G_2$-instantons can discriminate between $\G_2$-structures on $X$; in particular, between the two natural nearly parallel $\G_2$-structures on $X$.  We also see that the family of deformed $\G_2$-structures for these two nearly parallel $\G_2$-structures has different dimensions, and there is no obvious relation between them.
\end{remark}

\begin{corollary}\label{cor:examples.metrics}
Recall the two Einstein metrics $g^{ts}$ and $g^{np}$ on $X$ given in \eqref{eq:g.ts} and \eqref{eq:g.np}, and recall that $\varphi_{1,\varepsilon}$ induces $g^{ts}$ and $\varphi_{1/\sqrt{5},\varepsilon}$ induces $g^{np}$ for $\varepsilon\in\{\pm1\}$.
\begin{itemize}
 \item Using the ansatz \eqref{eq:Connection}, there is a circle of non-trivial deformed $\G_2$-instantons with respect to $\varphi_{1,-1}$, whereas there are no non-trivial deformed $\G_2$-instantons with respect to $\varphi_{1,+1}$.
 \item Using the ansatz \eqref{eq:Connection}, there is a circle plus two isolated examples of non-trivial deformed $\G_2$-instantons with respect to $\varphi_{1/\sqrt{5},-1}$, whereas there is a $2$-sphere of non-trivial deformed $\G_2$-instantons with respect to $\varphi_{1/\sqrt{5},+1}$.
\end{itemize}
\end{corollary}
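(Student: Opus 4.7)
The plan is to derive Corollary \ref{cor:examples.metrics} as a direct, essentially combinatorial, consequence of Proposition \ref{prop:examples}. All the analytical work has been done in that proposition, so what remains is simply to identify, for each of the four $\G_2$-structures $\varphi_{1,+1}$, $\varphi_{1,-1}$, $\varphi_{1/\sqrt{5},+1}$ and $\varphi_{1/\sqrt{5},-1}$ in the statement, which of the three bullet points in Proposition \ref{prop:examples} applies, and then to transcribe the resulting moduli of non-trivial solutions.

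First I would record the numerical comparison $1/\sqrt{5}<1/\sqrt{2}<1$ (equivalent to $2<5$ and $1<2$), which places $t=1/\sqrt{5}$ in the open interval $(0,1/\sqrt{2})$ relevant to the first two bullets of Proposition \ref{prop:examples}, and $t=1$ in the closed interval $[1/\sqrt{2},\infty)$ appearing in the third bullet. Then I would walk through the four cases in turn. For $(t,\varepsilon)=(1,+1)$, none of the three bullets applies: the first requires $t<1/\sqrt{2}$, while the second and third require $\varepsilon=-1$; hence the only deformed $\G_2$-instanton arising from \eqref{eq:Connection} is the trivial flat one. For $(t,\varepsilon)=(1,-1)$ only the third bullet applies, so the non-trivial solutions are exactly those satisfying \eqref{eq:constraint.2}, forming a circle. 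For $(t,\varepsilon)=(1/\sqrt{5},+1)$ only the first bullet applies, so the non-trivial solutions form the $2$-sphere given by \eqref{eq:constraint}. Finally, for $(t,\varepsilon)=(1/\sqrt{5},-1)$ the second bullet applies, and the non-trivial solutions are the union of the circle \eqref{eq:constraint.2} with the two isolated solutions given by \eqref{eq:constraint.3}. In each of the four cases this matches precisely what is claimed in the statement.

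There is no real obstacle: the heavy lifting has already been packaged into Proposition \ref{prop:examples}, which reduces the deformed $\G_2$-instanton equation under the ansatz \eqref{eq:Connection} to the algebraic condition \eqref{eq:dG2.L0} and organises its non-trivial solutions according to the sign of $\varepsilon$ and the position of $t$ relative to $1/\sqrt{2}$. The only thing to watch is that the four distinguished values of $(t,\varepsilon)$ appearing in the corollary land in the correct ranges; the numerical comparison above takes care of this, and the proof is then a short case analysis with no further computation.
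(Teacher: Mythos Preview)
Your proposal is correct and matches the paper's approach exactly: the paper simply states that Proposition \ref{prop:examples} (together with \eqref{eq:varphi.ts.np}) immediately gives this corollary, and your case-by-case reading of that proposition for the four values $(t,\varepsilon)\in\{1,1/\sqrt{5}\}\times\{\pm1\}$ is precisely the intended argument.
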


\begin{remark}
 Corollary \ref{cor:examples.metrics} indicates how Proposition \ref{prop:examples} shows that deformed $\G_2$-instantons can be used to distinguish between isometric $\G_2$-structures on $X$; in particular, between the two natural Einstein metrics on $X$. However, we observe that for these two Einstein metrics, whilst the spaces of deformed $\G_2$-instantons having the form \eqref{eq:Connection} are very different for the two isometric $\G_2$-structures, their Euler characteristics are the same. This is pertinent since one might hope to use the Euler characteristic of the moduli space as a possible enumerative invariant for deformed $\G_2$-instantons.
\end{remark}

We give a concrete example of the construction.

\begin{example}\label{ex:7sphere.dG2}
Take the 7-sphere $S^7$ as in Example \ref{ex:7sphere} and let $L_0$ be the trivial complex line bundle over $S^7$.   Corollary \ref{cor:examples.np} gives a $2$-sphere of deformed $\G_2$-instantons on $L_0$ over $(S^7,\varphi^{np})$, and a circle of deformed $\G_2$-instanton on $L_0$ over $(S^7,\varphi^{ts})$.  On the other hand, Corollary \ref{cor:examples.metrics} shows that we have a family of deformed $\G_2$-instantons on $L_0$ consisting of a circle plus two further points for another $\G_2$-structure inducing the squashed metric $g^{np}$, and we have no known non-trivial deformed $\G_2$-instantons on $L_0$ for another $\G_2$-structure inducing the round metric $g^{ts}$.

In this way, deformed $\G_2$-instantons on the trivial complex line bundle can be used to distinguish between the two homogeneous nearly parallel $\G_2$-structures on $S^7$, and between isometric $\G_2$-structures for the two homogeneous Einstein metrics on $S^7$.
\end{example}

\subsection{Non-trivial line bundles}
One may ask whether there are non-trivial examples of deformed $\G_2$-instantons on non-trivial line bundles.  The natural approach is to use a non-trivial bundle $L$ over $Z$ equipped with an anti-self-dual connection $A_0$, using the ideas in subsection \ref{ss:ASD.1}.  We shall give a particular example, which may clearly be generalized to  other 3-Sasakian 7-manifolds,  of a existence result for non-trivial deformed $\G_2$-instantons on non-trivial line bundles.

Consider the setting of Example \ref{ex:AW}, where $X$ is the Aloff--Wallach space $\SU(3)/\U(1)_{1,1}$, where $\U(1)_{1,1}$ is given in \eqref{eq:AW}, and recall that $Z=\overline{\mathbb{CP}^2}$. Let $L=\cO_{\overline{\mathbb{CP}^2}}(k)$ for some $k \in \mathbb{Z}$, and let $A_0$ be the connection on $L$ with harmonic curvature (which must be an ASD instanton as observed in Example \ref{ex:AW}).

Recalling the $\eta_i$ in \eqref{eq:eta.connection}, we may consider a connection $A$ on $\pi^*L$ over $X$ given by
\begin{equation}\label{eq:connection.AW}
A=\pi^*A_0 + ia , \ \text{for} \ \ a=  a_j \eta_j,
\end{equation}
where $a_1,a_2,a_3\in\mathbb{R}$.  The curvature of $A$ is $F_A=F_{A_0} + ida$ and as $A_0$ is anti-self-dual we have $\pi^*F_{A_0} \wedge \psi_{t,\varepsilon}=0$ by Lemma \ref{lem:ASD.1}. Hence,
\begin{equation}
F_A \wedge \psi_{t,\varepsilon}=ida \wedge \psi_{t,\varepsilon}=- 2i\big( (1-2\varepsilon t^2) ( a_1 \eta_{23} + a_2 \eta_{31})+(1-2t^2)a_3 \eta_{12} \big) \wedge \pi^*\vol_Z.
\end{equation}
by \eqref{eq:F_Wedge_Psi}. As for the cubic term in the curvature we find that 
\begin{equation}\label{eq:FA3.1}
F_A^3=3i\pi^*F_{A_0}^2 \wedge da - 3 \pi^*F_{A_0} \wedge (da)^2 -i(da)^3
\end{equation} 
as $F_{A_0}^3=0$ for dimensional reasons. By inspection, we see that $(da)^2 = \beta_i \wedge \omega_i $ for some $2$-forms $\beta_i$. As the $\omega_i$ are self-dual and $F_{A_0}$ is anti-self-dual we find that  $F_{A_0} \wedge (da)^2=0$.  Hence, \eqref{eq:FA3.1} becomes
\begin{equation}\label{eq:FA3.2}
F_A^3 = -i(da)^3 + 3i\pi^*F_{A_0}^2 \wedge da.
\end{equation}
The first term in \eqref{eq:FA3.2} is given by the right-hand side of \eqref{eq:Cubic_Term} (multiplied by $6$). As for the second term, we find that $F_{A_0}^2 = |F_{A_0}|^2 \vol_Z$, with $|\cdot|$ denoting the norm with respect to $g_Z$, since $F_{A_0}$ is anti-self-dual (recalling that $F_{A_0}$ is imaginary-valued).  Thus, using \eqref{eq:FA}, we see that
\begin{equation}\label{eq:FA3.3}
3i\pi^*F_{A_0}^2 \wedge da = -6i \pi^*|F_{A_0}|^2 \left( a_1 \eta_{23}  + a_2 \eta_{31} +a_3 \eta_{12}\right) \wedge \pi^*\vol_Z.
\end{equation}
Inserting \eqref{eq:Cubic_Term} and \eqref{eq:FA3.3} in \eqref{eq:FA3.2} shows that the deformed $\G_2$-instanton equation  \eqref{eq:dG2} for $\psi_{t,\varepsilon}$ is equivalent to
\begin{multline}\label{eq:dG2.Ok}
\big(8(a_1^2+a_2^2+a_3^2)-\pi^*|F_{A_0}|^2-2(1-2\varepsilon t^2)\big) (a_1\eta_{23}+a_2\eta_{31})\\ +\big(8(a_1^2+a_2^2+a_3^2)-\pi^*|F_{A_0}|^2-2(1-2t^2)\big)a_3\eta_{12}=0. 
\end{multline}
At this point, we use the fact that $F_{A_0}=ik\omega$, where $\omega$ is the Fubini--Study form on $\overline{\mathbb{CP}^2}$, and thus  $|F_{A_0}|^2=2k^2$.  (Note that we need $|F_{A_0}|^2$ to be constant in order for \eqref{eq:dG2.Ok} to have a solution for constant $a_1,a_2,a_3$ which are not all zero.)
Inserting this into \eqref{eq:dG2.Ok} for $\varepsilon=+1$ gives that non-trivial solutions must satisfy
\begin{equation}\label{eq:Solution_Intermediate_2}
a_1^2+a_2^2+a_3^2  = \frac{1}{4}(1-2t^2+k^2).
\end{equation} 
We see that \eqref{eq:Solution_Intermediate_2} has non-trivial solutions for $a_1,a_2,a_3$ if and only if $2t^2<1+k^2$.  We therefore obtain  deformed $\G_2$-instantons with respect to $\psi_{t,+1}$ on $\pi^*\mathcal{O}(k)$ for $k\neq 0$ arising from the ansatz \eqref{eq:connection.AW} which are \emph{not} given by the pullback of the ASD instanton on $\mathcal{O}(k)$ (which is a deformed $\G_2$-instanton by Lemma \ref{lem:ASD.1}) for these values of $t$.

If we instead look at \eqref{eq:dG2.Ok} for $\epsilon=-1$ then for non-trivial solutions we either have
\begin{gather}\label{eq:Solution_Intermediate_3}
 a_1^2+a_2^2=\frac{1}{4}(1+2t^2+k^2)\quad\text{and}\quad a_3=0,\;\,\text{or}\\
a_1^2+a_2^2=0\quad\text{and}\quad a_3^2=\frac{1}{4}(1-2t^2+k^2).\label{eq:Solution_Intermediate_4}
\end{gather}
We see that \eqref{eq:Solution_Intermediate_3} admits non-trivial solutions for all $t$, whereas \eqref{eq:Solution_Intermediate_4} admits non-trivial solutions if and only if $2t^2<1+k^2$, just as for \eqref{eq:Solution_Intermediate_2}.

Overall, we have the following proposition, which generalizes Proposition \ref{prop:examples} for the case of the Aloff--Wallach space $\SU(3)/\U(1)_{1,1}$.

\begin{proposition}\label{prop:AW} Let $X$ be the Aloff--Wallach space $\SU(3)/\U(1)_{1,1}$ as in Example \ref{ex:AW}, with projection $\pi:X\to Z=\overline{\mathbb{CP}^2}$. 
	Let $L=\cO(k)$ on $Z$ for $k\in\mathbb{Z}$, let $A_0$ be the unitary ASD instanton on $L$ and let $A$ be a connection on $\pi^*L$ as in \eqref{eq:connection.AW} which is a deformed $\G_2$-instanton with respect to $\psi_{t,\varepsilon}$ given in \eqref{eq:psi.t.nu}.  Then either $a_1=a_2=a_3=0$, and so $A=\pi^*A_0$ (and thus is a $\G_2$-instanton), or one of the following holds:
	\begin{itemize}
	\item $t\in(0,\sqrt{(1+k^2)/2})$, $\varepsilon=+1$ and  $a_1,a_2,a_3$ satisfy \eqref{eq:Solution_Intermediate_2};
	\item $t\in(0,\sqrt{(1+k^2)/2})$, $\varepsilon=-1$ and  $a_1,a_2,a_3$ satisfy \eqref{eq:Solution_Intermediate_3} or \eqref{eq:Solution_Intermediate_4};
	\item $t\geq \sqrt{(1+k^2)/2}$, $\varepsilon=-1$ and $a_1,a_2,a_3$ satisfy \eqref{eq:Solution_Intermediate_3}.
	\end{itemize}
\end{proposition}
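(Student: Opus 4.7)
The plan is to mimic the strategy used for Proposition \ref{prop:examples}, but carefully account for the extra contribution coming from the non-trivial base connection $A_0$ on $L=\cO(k)$. First I would write $A=\pi^*A_0+ia$ with $a=a_j\eta_j$ as in \eqref{eq:connection.AW}, so that $F_A=\pi^*F_{A_0}+ida$, and then split the deformed $\G_2$-instanton equation \eqref{eq:dG2} into its linear-in-$F_A$ and cubic-in-$F_A$ pieces. For the linear piece $F_A\wedge\psi_{t,\varepsilon}$, Lemma \ref{lem:ASD.1} immediately kills $\pi^*F_{A_0}\wedge\psi_{t,\varepsilon}$ because $A_0$ is ASD, leaving only the computation already performed in \eqref{eq:F_Wedge_Psi}.

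The main technical step is the cubic term $F_A^3$. Expanding by the binomial formula, $\pi^*F_{A_0}^3=0$ for dimension reasons on $Z^4$, and the cross term $\pi^*F_{A_0}\wedge(da)^2$ must also vanish: writing $(da)^2$ in terms of the $\eta_{ij}\wedge\omega_k$ components shows that $(da)^2$ is a sum of products of $\eta$-factors with the self-dual forms $\omega_i$, while $F_{A_0}$ is the pullback of an anti-self-dual form on $Z$, so the wedge is forced to be zero. This reduces matters to $F_A^3=-i(da)^3+3i\pi^*F_{A_0}^2\wedge da$, where $(da)^3$ reproduces the calculation \eqref{eq:Cubic_Term} and $F_{A_0}^2=|F_{A_0}|^2\vol_Z$ by the anti-self-duality of $F_{A_0}$. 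At this point I must use that $F_{A_0}=ik\omega$ for the Fubini--Study form $\omega$ on $\overline{\mathbb{CP}^2}$, which gives $|F_{A_0}|^2=2k^2$; it is crucial that this pointwise norm is constant, since otherwise the constant ansatz \eqref{eq:connection.AW} could not yield a solution.

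Plugging everything into \eqref{eq:dG2} produces the single equation \eqref{eq:dG2.Ok}, which has the form
\begin{equation*}
C_1(a_1\eta_{23}+a_2\eta_{31})+C_2 a_3\eta_{12}=0
\end{equation*}
with $C_1=8|a|^2-2k^2-2(1-2\varepsilon t^2)$ and $C_2=8|a|^2-2k^2-2(1-2t^2)$, where $|a|^2=a_1^2+a_2^2+a_3^2$. Since $\eta_{23},\eta_{31},\eta_{12}$ are linearly independent, the analysis proceeds exactly as for Proposition \ref{prop:examples}: for $\varepsilon=+1$ one has $C_1=C_2$ and a non-trivial solution forces this common quantity to vanish, giving \eqref{eq:Solution_Intermediate_2}; for $\varepsilon=-1$ one splits into the two cases $a_3=0$ or $(a_1,a_2)=0$, yielding \eqref{eq:Solution_Intermediate_3} and \eqref{eq:Solution_Intermediate_4}. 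Solvability for real $a_i$ then pins down the three ranges of $t$ appearing in the proposition.

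The step I expect to be the main obstacle is verifying that the mixed term $\pi^*F_{A_0}\wedge(da)^2$ vanishes, since it is the one place where the anti-self-duality of $F_{A_0}$ must be combined with the bigraded structure of $da$ on the total space of the $\SU(2)$-fibration; everything else is bookkeeping with the formulas already established in Section \ref{sec:3Sasakian} and Section \ref{sec:Examples}.
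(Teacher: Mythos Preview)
Your proposal is correct and follows essentially the same route as the paper: you compute $F_A\wedge\psi_{t,\varepsilon}$ and $F_A^3$ separately, use Lemma \ref{lem:ASD.1} and the ASD/SD orthogonality to eliminate the $\pi^*F_{A_0}\wedge\psi_{t,\varepsilon}$ and $\pi^*F_{A_0}\wedge(da)^2$ terms, invoke $F_{A_0}=ik\omega$ to make $|F_{A_0}|^2=2k^2$ constant, and then reduce to the case analysis of Proposition \ref{prop:examples} shifted by $k^2$. The step you flag as the main obstacle---the vanishing of $\pi^*F_{A_0}\wedge(da)^2$---is exactly the point the paper singles out as well, and your justification via the bigrading agrees with theirs.
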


\begin{remark} We see that \eqref{eq:Solution_Intermediate_4} has non-trivial solutions for $t=1$ if and only if $|k|>1$.  In particular, 
 Proposition \ref{prop:AW} gives non-trivial deformed $\G_2$-instantons on $\pi^*\mathcal{O}(k)$ over $X$ with respect to the $\G_2$-structure $\varphi_{1,+1}$, which induces the 3-Sasakian metric $g^{ts}$, if and only if $ |k|>1$.  Moreover, when $|k|>1$, Proposition \ref{prop:AW} gives a 2-sphere of non-trivial deformed $\G_2$-instantons on $\pi^*\mathcal{O}(k)$ with respect to $\varphi_{1,+1}$, whereas it gives a family of deformed $\G_2$-instantons consisting of a circle and two points with respect to $\varphi^{ts}$.  
\end{remark}

Proposition \ref{prop:AW} has the following immediate corollary.

\begin{corollary}\label{cor:AW}
 Let $X$ be the Aloff--Wallach space $\SU(3)/\U(1)_{1,1}$ as in Example \ref{ex:AW}, with projection $\pi:X\to \overline{\mathbb{CP}^2}$, and recall the nearly parallel $\G_2$-structures $\varphi^{np}$ and $\varphi^{ts}$ on $X$ given in \eqref{eq:varphi.ts.np}.
 \begin{itemize}
  \item For every $k\in\mathbb{Z}$, there is a $2$-sphere of non-trivial deformed $\G_2$-instantons on $\pi^*\mathcal{O}(k)$ with respect to  $\varphi^{np}$. 
  \item If $k\in\{0,\pm 1\}$, there is a circle of non-trivial deformed $\G_2$-instantons on $\pi^*\mathcal{O}(k)$  with respect to $\varphi^{ts}$, and if $k\in\mathbb{Z}\setminus\{0,\pm1\}$ there is a circle and two further examples of non-trivial deformed $\G_2$-instantons on $\pi^*\mathcal{O}(k)$  with respect to $\varphi^{ts}$.
 \end{itemize}
\end{corollary}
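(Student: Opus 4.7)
The plan is to apply Proposition \ref{prop:AW} directly, specializing to the two distinguished parameter values for $(t,\varepsilon)$ identified in \eqref{eq:varphi.ts.np} that correspond to the two nearly parallel $\G_2$-structures $\varphi^{np}$ and $\varphi^{ts}$ on $X$. Since Proposition \ref{prop:AW} already classifies all non-trivial solutions of the form \eqref{eq:connection.AW}, the task reduces to checking, for each fixed $(t,\varepsilon)$ and each $k\in\mathbb{Z}$, which of the three bullet cases of Proposition \ref{prop:AW} apply and which of the systems \eqref{eq:Solution_Intermediate_2}, \eqref{eq:Solution_Intermediate_3}, \eqref{eq:Solution_Intermediate_4} admit non-trivial real solutions.

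First I would treat $\varphi^{np}=\varphi_{1/\sqrt{5},+1}$. With $\varepsilon=+1$ and $t=1/\sqrt{5}$, the inequality $2t^2<1+k^2$ becomes $2/5<1+k^2$, which holds trivially for every $k\in\mathbb{Z}$. Hence the first bullet of Proposition \ref{prop:AW} applies for all $k$, and \eqref{eq:Solution_Intermediate_2} cuts out a $2$-sphere in $(a_1,a_2,a_3)$-space of radius $\tfrac{1}{2}\sqrt{1-2/5+k^2}=\tfrac{1}{2}\sqrt{3/5+k^2}$, yielding the desired $2$-sphere of non-trivial deformed $\G_2$-instantons on $\pi^*\mathcal{O}(k)$.

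Next I would treat $\varphi^{ts}=\varphi_{1,-1}$. With $\varepsilon=-1$ and $t=1$, the third bullet of Proposition \ref{prop:AW} applies provided $1\geq\sqrt{(1+k^2)/2}$, i.e.\ $k^2\leq 1$, so for $k\in\{0,\pm 1\}$ we obtain exactly the circle given by \eqref{eq:Solution_Intermediate_3} (of squared radius $\tfrac{1}{4}(3+k^2)$), and no further solutions. For $k\in\mathbb{Z}\setminus\{0,\pm 1\}$, one has $k^2>1$ so $1<\sqrt{(1+k^2)/2}$ and the second bullet of Proposition \ref{prop:AW} applies instead; this retains the same circle from \eqref{eq:Solution_Intermediate_3} and additionally activates \eqref{eq:Solution_Intermediate_4}, which then has the two real solutions $a_1=a_2=0$, $a_3=\pm\tfrac{1}{2}\sqrt{k^2-1}$.

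There is no real obstacle here, as the corollary is a bookkeeping consequence of Proposition \ref{prop:AW}; the only subtlety worth flagging in the write-up is to emphasize that the circle from \eqref{eq:Solution_Intermediate_3} is always present when $\varepsilon=-1$ regardless of $k$, so that the $|k|>1$ case genuinely augments (rather than replaces) the $|k|\leq 1$ case with two additional isolated examples.
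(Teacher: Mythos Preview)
Your proposal is correct and follows exactly the approach the paper takes: the paper simply states that Corollary~\ref{cor:AW} is an immediate consequence of Proposition~\ref{prop:AW}, and you have spelled out precisely the case-by-case verification (checking which bullet of Proposition~\ref{prop:AW} applies for $(t,\varepsilon)=(1/\sqrt{5},+1)$ and $(t,\varepsilon)=(1,-1)$ as $k$ varies) that this entails.
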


\begin{remark}
 Proposition \ref{prop:AW} continues to demonstrate how deformed $\G_2$-instantons can distinguish between nearly parallel $\G_2$-structures and isometric $\G_2$-structures. Moreover, for isometric $\G_2$ structures, the observed equality between the Euler characteristics of the families of deformed $\G_2$-instantons, having the form \eqref{eq:Connection}, continues to hold in this setting.
\end{remark}

\subsection{Moduli spaces} We now make some observations on the families of deformed $\G_2$-instantons we have constructed, and their relation to the deformation theory of deformed $\G_2$-instantons developed in \cite{KawaiYamamoto}.
To state the deformation theory result we recall the following definition.

\begin{definition}\label{dfn:moduli}
For a 7-manifold $X$ with a coclosed $\G_2$-structure $\varphi$ and a Hermitian complex line bundle $L$ on $X$,  we let $\mathcal{M}(X,\varphi,L)$ denote the \emph{moduli space} of deformed $\G_2$-instantons on $L$ with respect to $\varphi$.  
Let $A\in\mathcal{M}(X,\varphi,L)$ and consider the complex
\begin{equation}\label{eq:complex}
0\longrightarrow \Omega^0(X) \stackrel{d}{\longrightarrow} \Omega^1(X) \stackrel{(\frac{1}{2}F_A^2+*\varphi)\wedge d}\longrightarrow d\Omega^5(X)\longrightarrow 0.
\end{equation}
Then $A$ is \emph{unobstructed} if  $H^2=0$ for   \eqref{eq:complex}, i.e.~if the linearisation of the deformed $\G_2$-instanton condition $(\frac{1}{2}F_A^2+*\varphi)\wedge d:\Omega^1(X)\to d\Omega^5(X)$ is surjective; otherwise $A$ is \emph{obstructed}. 
\end{definition}

We now state a deformation theory result that follows from \cite{KawaiYamamoto}, which motivates the definition of unobstructed in Definition \ref{dfn:moduli}.

\begin{theorem}\label{thm:KY}
Let $(X^7,\varphi)$ be a compact $7$-manifold with a coclosed $\G_2$-structure, and let $L$ be a Hermitian complex line bundle on $X$.  Then $\mathcal{M}(X,\varphi,L)$ has expected dimension   $b^1(X)$. Therefore, if $A\in\mathcal{M}(X,\varphi,L)$ is unobstructed, then $\mathcal{M}(X,\varphi,L)$ is a smooth manifold near $A$ of dimension $b^1(X)$.

Moreover, if $A\in\mathcal{M}(X,\varphi,L)$ and 
\begin{itemize}\item $d\varphi=\lambda \ast_\varphi \varphi$ for some $\lambda \in \mathbb{R}$, or 
 \item $F_A^3\neq 0$ everywhere,
\end{itemize}
 then for generic coclosed $\G_2$-structures $\varphi'$ sufficiently near $\varphi$ so that $[*_{\varphi'}\varphi']=[*_{\varphi}\varphi]\in H^4(X)$, the subset of $\mathcal{M}(X,\varphi',L)$ of connections sufficiently near $A$ is a smooth manifold of dimension $b^1(X)$ (if it is non-empty).
\end{theorem}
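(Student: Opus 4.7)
The first two assertions follow the standard gauge-theoretic deformation template. The linearisation of $E(A):=\tfrac{1}{6}F_A^3+F_A\wedge\psi$ at a solution $A$ in the direction $a\in\Omega^1(X,i\mathbb{R})$ is
\[
L_A(a)=\bigl(\tfrac{1}{2}F_A^2+\psi\bigr)\wedge da,
\]
which takes values in $d\Omega^5(X)$ since $d\psi=0=dF_A^2$. Paired with the infinitesimal gauge action $d:\Omega^0\to\Omega^1$, this is exactly the complex of Definition~\ref{dfn:moduli}, and I would verify it is elliptic by checking exactness of the symbol sequence at nonzero $\xi\in T^*_xX$, which reduces to the standard nondegeneracy of $\alpha\mapsto\psi\wedge\alpha$ on the relevant subspace of $2$-forms (the $F_A^2$ contribution being a lower-order algebraic perturbation). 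Augmenting $L_A$ by $d^*:\Omega^1\to\Omega^0$ produces a first-order elliptic operator whose analytic index equals $-\chi$ of the complex, and by homotopy invariance through nondegenerate $4$-forms $\chi$ may be computed at the model complex with middle map $a\mapsto\psi\wedge da$. Hodge theory together with the $\G_2$-decomposition of forms then yields $\chi=1-b^1(X)$, so the virtual dimension $\dim H^1-\dim H^2=b^1(X)$ follows. If $A$ is unobstructed ($H^2=0$), the implicit function theorem applied to $E$ in an $L^2_k$-Coulomb slice produces a smooth local model of $\mathcal{M}(X,\varphi,L)$ near $A$ of this expected dimension.

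For the Moreover statement I would apply Sard--Smale to the universal moduli space over the Banach manifold $\mathcal{C}$ of coclosed $\G_2$-structures $\varphi'$ with $[\ast_{\varphi'}\varphi']=[\psi]\in H^4(X)$, so that every admissible variation of $\psi'$ has the form $\dot\psi'=d\eta$ for some $\eta\in\Omega^3(X)$. The universal equation $\Phi(A,\varphi'):=E_{\varphi'}(A)$ has linearisation $(a,\eta)\mapsto L_A(a)+F_A\wedge d\eta$ at $(A,\varphi)$; once this combined map is surjective onto $d\Omega^5$, Sard--Smale produces a Baire-generic set of $\varphi'$ near $\varphi$ for which the slice of $\mathcal{M}(X,\varphi',L)$ near $A$ is smooth of dimension $b^1(X)$.

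The main obstacle is establishing precisely this surjectivity, and it is exactly where the two dichotomous hypotheses enter. Under $F_A^3\neq 0$ pointwise, $F_A$ is pointwise nondegenerate and $\beta\mapsto F_A\wedge\beta$ is a pointwise surjection $\Omega^3_x\to\Omega^5_x$, so a partition-of-unity/cohomological argument combining this with $\operatorname{im}L_A$ fills $d\Omega^5$. Under $d\varphi=\lambda\psi$ (nearly parallel), $\psi=\lambda^{-1}d\varphi$ is itself exact so that perturbations of $\varphi$ inside $\mathcal{C}$ already produce an abundant family of exact $4$-form variations of $\psi$; the Killing-spinor structure on a nearly parallel $\G_2$-manifold then permits an explicit identification of the cokernel of $L_A$ as a specific subspace of harmonic forms, which can be shown to be hit by $F_A\wedge d\eta$ as $\eta$ varies. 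In either case the combined parameter directions cover the obstruction space and the theorem follows.
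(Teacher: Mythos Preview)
The paper does not prove this theorem: it is stated with the preface ``a deformation theory result that follows from \cite{KawaiYamamoto}'' and no argument is given. So there is no in-paper proof to compare against; your sketch is an attempt to reconstruct the cited result.

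Your overall architecture (elliptic deformation complex, index computation, implicit function theorem, then Sard--Smale over a parameter space of coclosed $\G_2$-structures) is the right one and matches what Kawai--Yamamoto do. However, there is a genuine gap at the ellipticity step. You write that the symbol check ``reduces to the standard nondegeneracy of $\alpha\mapsto\psi\wedge\alpha$ \ldots\ (the $F_A^2$ contribution being a lower-order algebraic perturbation).'' This is false: the operator is $a\mapsto(\tfrac12 F_A^2+\psi)\wedge da$, so the principal symbol at $\xi$ is $a\mapsto(\tfrac12 F_A^2+\psi)\wedge\xi\wedge a$, and the $F_A^2$ term enters the symbol on equal footing with $\psi$. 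Ellipticity therefore requires that the $4$-form $\tfrac12 F_A^2+\psi$ itself have the correct nondegeneracy, which is not automatic for an arbitrary connection. One of the substantive contributions of \cite{KawaiYamamoto} is to show that \emph{when $A$ is a deformed $\G_2$-instanton} this $4$-form is (up to a conformal factor) the dual $4$-form of a genuine $\G_2$-structure, which is what makes the complex elliptic and also what justifies the homotopy you invoke for the index. Without this, neither the Fredholm property nor the index computation goes through.

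Your transversality sketch is also uneven. The case $F_A^3\neq 0$ is essentially right: pointwise surjectivity of $\beta\mapsto F_A\wedge\beta$ from $\Lambda^3$ to $\Lambda^5$ (which follows from rank considerations once $F_A^3\neq 0$) yields $d(F_A\wedge\Omega^3)=d\Omega^5$. By contrast, your treatment of the hypothesis $d\varphi=\lambda\psi$ is too vague to count as an argument, and as written it breaks when $\lambda=0$ (the torsion-free case), which the theorem explicitly allows. In \cite{KawaiYamamoto} this case is handled by a different mechanism than the one you indicate, and you would need to supply that.
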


Theorem \ref{thm:KY} has the following  consequence. 

\begin{corollary}\label{cor:moduli}
 Let $(X^7,\varphi)$ be a compact $7$-manifold with a coclosed $\G_2$-structure, and suppose that $X^7$ admits a nearly parallel $\G_2$-structure.  Let $L$ be a Hermitian complex line bundle on $X$.  Then the expected dimension of $\mathcal{M}(X,\varphi,L)$ is $0$.  In particular, if $A\in\mathcal{M}(X,\varphi,L)$ is unobstructed then $A$ is rigid and locally isolated in $\mathcal{M}(X,\varphi,L)$. 
 
 Moreover, given $A\in\mathcal{M}(X,\varphi,L)$ such that $F_A^3\neq 0$ everywhere, for generic coclosed $\G_2$-structures $\varphi'$ near $\varphi$ with $[*_{\varphi'}\varphi']=[*_{\varphi}\varphi]$, the subset of $\mathcal{M}(X,\varphi',L)$ of connections sufficiently near $A$ is a discrete collection of points (if it is non-empty).
\end{corollary}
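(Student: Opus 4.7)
The plan is to derive Corollary~\ref{cor:moduli} directly from Theorem~\ref{thm:KY}; the only new input is the topological vanishing $b^1(X)=0$. Note that the hypothesis only supplies \emph{some} nearly parallel $\G_2$-structure $\varphi_0$ on $X$, not necessarily the $\varphi$ of the statement. I would use the standard fact that any such $\varphi_0$, satisfying $d\varphi_0 = \lambda*_{\varphi_0}\varphi_0$ with $\lambda\neq 0$ (Definition~\ref{dfn:np}), induces an Einstein metric on $X$ whose Einstein constant is a positive multiple of $\lambda^2$ (see e.g.~\cites{FKMS1997}). Hence the Ricci curvature of this metric is strictly positive, and Bochner's classical vanishing theorem for harmonic $1$-forms yields $b^1(X)=0$.

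With $b^1(X)=0$ in hand, the expected-dimension assertion of the corollary is immediate from Theorem~\ref{thm:KY}. If $A$ is unobstructed, the same theorem identifies $\mathcal{M}(X,\varphi,L)$ near $A$ with a smooth manifold of dimension $b^1(X)=0$; as any smooth $0$-manifold is a discrete set, $A$ is locally isolated. Rigidity, meaning the vanishing of $H^1$ in the deformation complex \eqref{eq:complex}, is packaged into the same statement: the expected dimension equals $\dim H^1-\dim H^2$, so $H^2=0$ (unobstructedness) combined with $b^1(X)=0$ forces $H^1=0$.

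For the moreover statement, I would apply the second bullet of Theorem~\ref{thm:KY} directly, using the hypothesis $F_A^3\neq 0$ everywhere. It produces, for generic nearby coclosed $\G_2$-structures $\varphi'$ satisfying $[*_{\varphi'}\varphi']=[*_{\varphi}\varphi]$, a smooth local moduli space of dimension $b^1(X)=0$, i.e.\ a discrete collection of points, whenever non-empty. The only genuine content of the argument is the Bochner step; once this is carried out, the conclusion is a formal application of Theorem~\ref{thm:KY}, so I do not anticipate any substantive obstacle.
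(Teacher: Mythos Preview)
Your proof is correct and follows essentially the same approach as the paper: both reduce to showing $b^1(X)=0$ via the fact that a nearly parallel $\G_2$-structure induces an Einstein metric of positive Ricci curvature, and then apply Theorem~\ref{thm:KY}. The only difference is that the paper invokes Myers' theorem (finite fundamental group, hence $b^1=0$) rather than Bochner vanishing, but these are interchangeable standard routes to the same conclusion.
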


\begin{proof}
 The result follows from Theorem \ref{thm:KY} and the fact that $X^7$ must have finite fundamental group by Myers theorem, since  the induced metric from a nearly parallel $\G_2$-structure is Einstein with positive scalar curvature.
\end{proof}

Since the non-trivial deformed $\G_2$-instantons given by Corollary \ref{cor:examples.np} are not rigid for the nearly parallel $\G_2$-structures $\varphi^{np}$ and $\varphi^{ts}$ on the 3-Sasakian $X^7$,
Corollary \ref{cor:moduli} yields the following result.

\begin{proposition}
All of the non-trivial deformed $\G_2$-instantons from Propositions \ref{prop:examples} and \ref{prop:AW} that exist in positive-dimensional families are obstructed.

In particular, the non-trivial deformed $\G_2$-instantons on $(X^7,\varphi^{ts})$ and $(X^7,\varphi^{np})$ on the trivial line bundle $L_0$ from Proposition \ref{prop:examples} are obstructed.  
\end{proposition}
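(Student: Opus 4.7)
The strategy is to derive a contradiction from Corollary \ref{cor:moduli} for any deformed $\G_2$-instanton that sits in a genuinely positive-dimensional family in the moduli space. Since $X^7$ is a compact $3$-Sasakian $7$-manifold, it admits the nearly parallel $\G_2$-structure $\varphi^{ts}$, so Corollary \ref{cor:moduli} applies to every coclosed $\G_2$-structure $\varphi_{t,\varepsilon}$ on $X$ considered in Propositions \ref{prop:examples} and \ref{prop:AW}. In particular, the expected dimension of $\mathcal{M}(X,\varphi_{t,\varepsilon},L)$ equals $b^1(X)=0$, the latter because the Einstein metric $g^{ts}$ has positive scalar curvature and hence, via Myers' theorem, forces $\pi_1(X)$ to be finite.

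The first step is to show that the families of non-trivial deformed $\G_2$-instantons produced by our ansatz \eqref{eq:Connection} (or \eqref{eq:connection.AW}) descend to genuinely positive-dimensional subsets of $\mathcal{M}(X,\varphi_{t,\varepsilon},L)$. Because the gauge group on a Hermitian line bundle acts on connections by shifts by exact imaginary $1$-forms, two connections $A=ia_j\eta_j$ and $A'=ia'_j\eta_j$ as in \eqref{eq:Connection} are gauge-equivalent only if $(a_j-a'_j)\eta_j$ is exact. However, the three $1$-forms $\eta_1,\eta_2,\eta_3$ are dual to nowhere-vanishing Killing vector fields $\xi_1,\xi_2,\xi_3$ with $\eta_i(\xi_i)=1$ everywhere, so integrating any non-zero real linear combination of the $\eta_j$ along a closed orbit of the corresponding $\xi$-action produces a non-zero period. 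Hence distinct $(a_1,a_2,a_3)\in\mathbb{R}^3$ yield distinct gauge equivalence classes, and the circle in \eqref{eq:constraint.2} and the $2$-sphere in \eqref{eq:constraint} embed as positive-dimensional submanifolds of $\mathcal{M}(X,\varphi_{t,\varepsilon},L_0)$. The same reasoning applies to the families appearing in Proposition \ref{prop:AW} for $\pi^*\mathcal{O}(k)$.

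Given this, suppose for contradiction that some $A$ lying in such a positive-dimensional family were unobstructed. By Corollary \ref{cor:moduli}, the moduli space would be a smooth $0$-manifold near $A$, making $A$ locally isolated in $\mathcal{M}(X,\varphi_{t,\varepsilon},L)$, which contradicts the previous paragraph. Hence every such $A$ must be obstructed, proving the first assertion. The "in particular" statement then follows by noting that $\varphi^{ts}=\varphi_{1,-1}$ produces the circle of solutions governed by \eqref{eq:constraint.2} and $\varphi^{np}=\varphi_{1/\sqrt{5},+1}$ produces the $2$-sphere of solutions governed by \eqref{eq:constraint}; both are positive-dimensional.

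The main place where care is needed is the gauge-equivalence step, since a priori one might worry that the $\SU(2)$-symmetry acting on the ansatz collapses the families down. The key point is the elementary observation above that the $\eta_j$ are nowhere-vanishing, closed in cohomology on the leaves, and linearly independent on each fibre of $\pi:X\to Z$, which rules out any exact linear combination. No refinement of the analytical deformation theory of \cite{KawaiYamamoto} is required beyond the black-box statement of Theorem \ref{thm:KY}.
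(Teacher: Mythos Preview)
Your proof is correct and follows the same approach as the paper: positive-dimensional families contradict the rigidity that would follow from unobstructedness via Corollary \ref{cor:moduli}. The paper's argument is a single sentence and leaves the gauge-inequivalence of distinct connections in \eqref{eq:Connection} implicit; your verification is a useful addition, though a quicker route than your period argument is to observe that $g^{-1}dg$ is always closed while any non-trivial combination $(a_j-a'_j)\eta_j$ is not, by \eqref{eq:structure.eta}.
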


\begin{remark}
We see from \eqref{eq:Cubic_Term} and Propositions \ref{prop:examples} and \ref{prop:AW} that all of the non-trivial deformed $\G_2$-instantons $A$ with respect to $\varphi_{t,\varepsilon}$ we have constructed have the property that $F_A^3\neq 0$ everywhere, and thus Corollary \ref{cor:moduli} applies.  Moreover, when $\varepsilon=+1$ we see that $[\psi_{t,+1}]=[\psi_{t',+1}]$ for all $t,t'$.  However, $\mathcal{M}(X,\varphi_{t',+1},L_0)$ still contains an $S^2$ family of deformed $\G_2$-instantons near $A$ for all $t'$ near $t$.  We conclude that the family $\varphi_{t,+1}$ is not sufficiently generic to enable us to peturb $A$ to become locally isolated.
\end{remark}

We now make an elementary observation, which follows from Remark 5.13 in \cite{KawaiYamamoto}.

\begin{lemma}\label{lem:trivial.moduli}
 Let $(X^7,\varphi)$ be a compact $7$-manifold with a nearly parallel $\G_2$-structure, and let $L_0$ be the trivial complex line bundle on $X$.  Then the trivial flat connection is unobstructed as a deformed $\G_2$-instanton, thus rigid and locally isolated in $\mathcal{M}(X,\varphi,L_0)$.
\end{lemma}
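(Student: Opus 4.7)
The plan is to work directly with the complex \eqref{eq:complex} at the trivial flat connection $A_0$, where $F_{A_0}=0$ and so the middle map reduces to $L\co\Omega^1(X)\to d\Omega^5(X)$, $L(\alpha)=\psi\wedge d\alpha$. The key observation is that because $\varphi$ is nearly parallel (in particular, coclosed), we have $d\psi=0$, and hence
\[
L(\alpha) \;=\; \psi\wedge d\alpha \;=\; d(\psi\wedge\alpha).
\]
So $L$ factors through the wedge map $W\co\Omega^1(X)\to\Omega^5(X)$, $W(\alpha)=\psi\wedge\alpha$, followed by $d$.

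Next I would verify that $W$ is a bundle isomorphism, by pointwise $\G_2$ linear algebra: using $*\psi=\varphi$ one has the standard identity $*(\alpha\wedge\psi)=V_\alpha\lrcorner\varphi$, where $V_\alpha$ is the metric dual of $\alpha$. The map $\alpha\mapsto V_\alpha\lrcorner\varphi$ is exactly the canonical isomorphism $\Omega^1\cong\Omega^2_7$, so composing with Hodge star shows $W$ is an isomorphism of smooth sections. Therefore the image of $L$ equals $d(W(\Omega^1(X)))=d\Omega^5(X)$, i.e.\ $L$ is surjective onto $d\Omega^5(X)$. This is precisely the vanishing $H^2=0$ for the complex \eqref{eq:complex}, so $A_0$ is unobstructed in the sense of Definition \ref{dfn:moduli}.

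To obtain the rigidity and local isolatedness, I would then invoke Corollary \ref{cor:moduli}: since $\varphi$ is nearly parallel, the induced metric is Einstein with positive scalar curvature, so by Myers' theorem $X$ has finite fundamental group, hence $b^1(X)=0$. The unobstructedness together with Theorem \ref{thm:KY} then shows that $\mathcal{M}(X,\varphi,L_0)$ is a smooth $0$-dimensional manifold near $A_0$, i.e.\ $A_0$ is rigid and locally isolated.

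The only nontrivial step is the pointwise check that $W$ is an isomorphism, but this is a routine consequence of the standard $\G_2$ decomposition $\Omega^2=\Omega^2_7\oplus\Omega^2_{14}$ together with the identity $*\psi=\varphi$; no ingredients beyond what has already been set up are needed.
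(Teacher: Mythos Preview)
Your argument has a genuine gap. The map $W\colon\Omega^1(X)\to\Omega^5(X)$, $W(\alpha)=\psi\wedge\alpha$, is \emph{not} a bundle isomorphism: $\Lambda^1$ has rank $7$ whereas $\Lambda^5$ has rank $\binom{7}{5}=21$. What your pointwise computation actually establishes is that $W$ is injective, with image the rank-$7$ subbundle $\Omega^5_7:=*\Omega^2_7$. Consequently the image of $L$ is only $d(\Omega^5_7)$, and you would still need to prove that $d(\Omega^5_7)=d\Omega^5$; this is the real content of the unobstructedness claim and it does not come for free.

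More tellingly, your argument uses only the hypothesis $d\psi=0$, i.e.\ that $\varphi$ is coclosed, and never invokes the nearly parallel condition $d\varphi=\lambda\psi$. But Corollary~\ref{cor:flat.obs} of this very paper exhibits coclosed $\G_2$-structures (namely $\varphi_{1/\sqrt{2},\varepsilon}$) for which the trivial flat connection \emph{is} obstructed. So an argument relying solely on $d\psi=0$ cannot possibly succeed: the nearly parallel hypothesis must enter in an essential way. The paper itself does not supply the details, instead citing Remark~5.13 of \cite{KawaiYamamoto}; a direct proof would require a Hodge-theoretic argument that genuinely exploits $d\varphi=\lambda\psi$ (for instance, to control $d$ acting on $\Omega^5_{14}=*\Omega^2_{14}$, or equivalently $d^*$ on $\Omega^2_{14}$). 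Your concluding step, deducing rigidity from $b^1(X)=0$ via Corollary~\ref{cor:moduli}, is fine once unobstructedness is actually in hand.
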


Lemma \ref{lem:trivial.moduli} yields the following interesting result.

\begin{proposition} For any $3$-Sasakian $7$-manifold $X$, the moduli spaces $\mathcal{M}(X,\varphi^{np},L_0)$ and $\mathcal{M}(X,\varphi^{ts},L_0)$ have at least two components of different dimensions.
\end{proposition}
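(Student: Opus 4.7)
The strategy is to combine Lemma~\ref{lem:trivial.moduli} with Corollary~\ref{cor:examples.np}: the former provides a $0$-dimensional component of each moduli space coming from the trivial flat connection, while the latter supplies a family of non-trivial deformed $\G_2$-instantons that I will argue descends to a \emph{positive-dimensional} subset of the moduli space, necessarily lying in a different component.

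Applying Lemma~\ref{lem:trivial.moduli} to the nearly parallel $\G_2$-structures $\varphi^{np}$ and $\varphi^{ts}$, the gauge class $[A_0]$ of the trivial flat connection on $L_0$ is rigid and locally isolated in both $\mathcal{M}(X,\varphi^{np},L_0)$ and $\mathcal{M}(X,\varphi^{ts},L_0)$, and so constitutes a $0$-dimensional connected component. On the other hand, Corollary~\ref{cor:examples.np} furnishes an $S^2$-family of non-trivial deformed $\G_2$-instantons on $L_0$ for $\varphi^{np}$, and an $S^1$-family for $\varphi^{ts}$, each of the explicit form \eqref{eq:Connection}.

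The remaining step is to verify that these families descend to positive-dimensional subsets of the moduli spaces. By \eqref{eq:FA} the curvature of the connection $A = i a_j \eta_j$ is a linear combination of the linearly independent $2$-forms $\omega_i+\eta_{jk}$, so the assignment $(a_1,a_2,a_3)\mapsto F_A$ is injective. Since gauge-equivalent $\U(1)$-connections necessarily share the same curvature, distinct parameter values yield gauge-inequivalent connections; the parameter sphere (resp.~circle) therefore embeds homeomorphically into $\mathcal{M}(X,\varphi^{np},L_0)$ (resp.~$\mathcal{M}(X,\varphi^{ts},L_0)$) as a connected subset of dimension $2$ (resp.~$1$), contained in a single connected component. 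Since every element of this subset has nonzero curvature while every connection gauge-equivalent to $A_0$ is flat, the subset is disjoint from $\{[A_0]\}$. Combining this component of dimension $\geq 2$ (respectively $\geq 1$) with the isolated component $\{[A_0]\}$ produces two components of different dimensions in each moduli space, as claimed.

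The only non-routine point is the curvature-injectivity argument ensuring the parameter family does not collapse under gauge; this is genuinely elementary here thanks to the explicit formula \eqref{eq:FA}, so no substantive obstacle is expected.
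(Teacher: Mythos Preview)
Your proof is correct and follows essentially the same approach as the paper: combine Lemma~\ref{lem:trivial.moduli} (the flat connection is locally isolated, hence a $0$-dimensional component) with the positive-dimensional families from Corollary~\ref{cor:examples.np}. The paper's argument is terser and does not spell out the gauge-injectivity step; your curvature argument via \eqref{eq:FA} makes explicit why the parameter families do not collapse in the moduli space, which is a welcome clarification the paper leaves implicit.
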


\begin{proof}
Since the trivial flat connection $A_0$ lies in $\mathcal{M}(X,\varphi^{np},L_0)$ and $\mathcal{M}(X,\varphi^{ts},L_0)$ and is locally isolated, it must define a component of the moduli space in each case.  Therefore, in each case the positive-dimensional family of non-trivial deformed $\G_2$-instantons from Proposition \ref{prop:examples} must lie in a different component of the moduli space to the trivial flat connection.
\end{proof}

\section{A Chern--Simons type functional}\label{sec:Chern-Simons}

In this section, we study a functional of Chern--Simons type, introduced in \cites{KarigiannisLeung}, which has deformed $\G_2$-instantons as critical points, in the setting of our examples.

\subsection{The functional} Let $X^7$ be a compact $7$-manifold with a coclosed $\G_2$-structure $\varphi$.  (The assumption of compactness is to ensure that integrals of various quantities over $X$ are guaranteed to be well-defined.)  Let $L$ be a Hermitian complex line bundle over $X$ and fix a unitary connection $A_0$ on $L$.  We let $t$ denote a coordinate on $[0,1]$ and we shall pullback various quantities from $X$ to $X\times [0,1]$ such as $\psi=*_{\varphi}\varphi$ and $L$; for ease of notation, we shall omit the pullback symbol.

Then, for any other unitary connection $A$ on $L$ we consider a unitary connection $\bA$ on the pullback of $L$ to  $[0,1] \times X$, given by 
\begin{equation}\label{eq:bA}
\bA=A_0+t(A-A_0).
\end{equation} 
We let $A_t=\bA|_{\{t\}\times X}$ and let $F_t$ be the curvature of $A_t$, which is
\begin{equation}\label{eq:Ft}
F_t=F_{A_0}+t(F_A-F_{A_0}).
\end{equation}
  Hence, the curvature of $\bA$ in \eqref{eq:bA} can be written 
\begin{equation}\label{eq:bF}
\bF=dt\wedge (A-A_0)+F_t.
\end{equation}
With this notation, we can make the following definition.

\begin{definition}\label{dfn:functional} Let $(X^7,\varphi)$ be a compact 7-manifold with a coclosed $\G_2$-structure and let $L$ be a Hermitian complex line bundle on $X$.  Given a unitary connection $A_0$ on $L$  we define the functional $\mathcal{F}$ on unitary connections $A$ on $L$ over $X$ by the formula:
\begin{equation}\label{eq:functional}
\cF(A)= \int_{X \times [0,1]} e^{\bF + \psi},
\end{equation}
where $\psi=*_{\varphi}\varphi$ and $\bF$ is given in \eqref{eq:bF}. 
It is shown in \cites{KarigiannisLeung} that the deformed $\G_2$-instanton equation \eqref{eq:dG2.intro} arises as the critical point equation for the functional $\cF$.
\end{definition}

Since $X$ is 7-dimensional, it is convenient to expand and rewrite $\cF$ in \eqref{eq:functional} as follows:
\begin{equation}\label{eq:functional.2}
\cF(A)  = \int_{X \times [0,1]} \frac{1}{3!} (\bF + \psi)^3 + \frac{1}{4!} (\bF + \psi)^4 
= \frac{1}{2} \int_{X \times [0,1]}  \bF^2 \wedge \psi + \frac{1}{12} \bF^4 .
\end{equation}
On the other hand, we have $\bF^k = F_{A_0}^k + d \left( cs_k(A_0,\bA) \right)$ on $[0,1] \times X$, where $\bF$ is given in \eqref{eq:bF} and $cs_k(A_0,\bA)$ is the $k$th transgression form. This can be explicitly written using the curvature $\bF_s$ of the connections $\bA_s= A_0 + s(\bA -A_0)$ for $s$ a real valued parameter. Indeed, we have
\begin{equation}\label{eq:csk}
cs_k(A_0,\bA)= k \int_0^1  (\bA-A_0) \wedge \bF_s^{k-1} \ ds .
\end{equation}
Thus, from Stokes' theorem and the fact that $F_{A_0}^2 \wedge \psi =0 = F_{A_0}^4$ by dimensional reasons, we may write \eqref{eq:functional.2} as:
\begin{equation}\label{eq:functional.3}
\cF(A)  = \frac{1}{2} \int_{X} cs_2(A_0,A) \wedge \psi + \frac{1}{12} cs_4(A_0,A) .
\end{equation}
In particular, we have the following observation.

\begin{lemma}\label{lem:functional.L0} In the notation of Definition \ref{dfn:functional}, let $L=L_0$ be the trivial complex line bundle and let $A_0$ be the trivial flat connection. Then the functional $\cF$ in \eqref{eq:functional} is given by
\begin{equation}\label{eq:functional.L0}
\cF(A) = \frac{1}{2} \int_{X} (A-A_0) \wedge \left( F_A \wedge \psi + \frac{1}{12} F_A^{3} \right) .
\end{equation}
\end{lemma}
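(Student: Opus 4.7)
The plan is to start from the reformulation \eqref{eq:functional.3}, which has already expressed $\cF(A)$ as an integral over $X$ involving the Chern--Simons transgression forms $cs_2(A_0,A)$ and $cs_4(A_0,A)$. These are defined by the on-$X$ version of \eqref{eq:csk},
$$cs_k(A_0, A) = k\int_0^1 (A-A_0)\wedge F_{A_s}^{k-1}\,ds, \qquad A_s = A_0 + s(A-A_0),$$
where $F_{A_s}$ denotes the curvature on $X$. The task then reduces to evaluating $cs_2(A_0,A)$ and $cs_4(A_0,A)$ explicitly when $A_0$ is the trivial flat connection on $L_0$.

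Because $L_0$ is a line bundle, the connections are abelian, so the curvature along the interpolating path is simply $F_{A_s} = dA_s = F_{A_0} + s(F_A - F_{A_0})$. The flatness of $A_0$ forces $F_{A_0}=0$, and hence $F_{A_s} = sF_A$ along the entire path. Substituting into the formula for $cs_k$ I obtain
$$cs_k(A_0, A) = k\int_0^1 (A-A_0)\wedge (sF_A)^{k-1}\,ds = \left(k\int_0^1 s^{k-1}\,ds\right)(A-A_0)\wedge F_A^{k-1} = (A-A_0)\wedge F_A^{k-1}.$$

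In particular $cs_2(A_0,A) = (A-A_0)\wedge F_A$ and $cs_4(A_0,A) = (A-A_0)\wedge F_A^3$. Plugging these into \eqref{eq:functional.3} and pulling the common factor $(A-A_0)$ out of the integrand produces \eqref{eq:functional.L0} directly. I do not anticipate any serious obstacle: the flatness of $A_0$ in the abelian setting makes the scaling $F_{A_s} = sF_A$ immediate, and the transgression integrals reduce to elementary monomial integrals in $s$. The only point worth double-checking is the overall constant $\tfrac{1}{12}$ inside the bracket, which comes from combining the prefactor $\tfrac{1}{2}$ in \eqref{eq:functional.3} with the coefficient $\tfrac{1}{12}$ of $cs_4$ already present there, since both transgression forms happen to carry the same wedge factor $(A-A_0)$.
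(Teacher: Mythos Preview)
Your proposal is correct and follows essentially the same route as the paper's own proof: both start from \eqref{eq:functional.3} and reduce to the observation that $cs_k(A_0,A)=(A-A_0)\wedge F_A^{k-1}$ because $F_{A_0}=0$. The paper states this identity in a single sentence, whereas you spell out the intermediate step $F_{A_s}=sF_A$ and evaluate the monomial integral $k\int_0^1 s^{k-1}\,ds=1$ explicitly; the extra detail is fine and the coefficient check at the end matches the paper's result.
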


\begin{proof}
The result follows from \eqref{eq:functional.3} and the observation that $cs_k(A_0,A)=(A-A_0) \wedge F_A^{k-1}$ by \eqref{eq:csk} as $F_{A_0}=0$.
\end{proof}

\subsection{Examples} For connections as in \eqref{eq:Connection} on 3-Sasakian 7-manifolds we find, after a lengthy but straightforward computation, the following using \eqref{eq:functional.L0}.

\begin{proposition}\label{prop:functional} Let $X^7$ be a $3$-Sasakian $7$-manifold with the coclosed $\G_2$-structure $\varphi_{t,\varepsilon}$ on \eqref{eq:G2_Structure}, and recall the $3$-Sasakian metric $g^{ts}$ in \eqref{eq:g.ts}.  For connections $A=ia_j\eta_j$ as in \eqref{eq:Connection} on the trivial complex line bundle $L_0$ on $X$, we have that the functional $\cF$ in \eqref{eq:functional} is given by
\begin{equation}\label{eq:functional.t.eps}
\cF(A)=- c \left[ (x^2+y^2)(2(x^2+y^2)-1) + 2 t^2 (  x^2+\varepsilon y^2 ) \right],
\end{equation}
where $x=a_3$, $y^2=a_1^2+a_2^2$ and
\begin{equation}\label{eq:c}
c= \int_X \eta_{123} \wedge \pi^* \vol_Z= \mathrm{Vol}(X,g^{ts})>0.
\end{equation}
\end{proposition}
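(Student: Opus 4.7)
The plan is to combine Lemma \ref{lem:functional.L0} with the curvature computations already performed in Section \ref{sec:Examples}. Since $L_0$ is trivial we may take $A_0=0$, so that $A-A_0 = i a_j \eta_j$ and the functional reduces to
\[
\cF(A) = \frac{1}{2}\int_X (ia_j\eta_j)\wedge\Big(F_A\wedge\psi_{t,\varepsilon} + \tfrac{1}{12}F_A^3\Big).
\]
The two factors inside the integrand have already been computed: $F_A\wedge\psi_{t,\varepsilon}$ is given by \eqref{eq:F_Wedge_Psi}, and $\tfrac{1}{6}F_A^3$ by \eqref{eq:Cubic_Term}. Thus the task reduces to wedging $ia_j\eta_j$ with these and keeping the horizontal-top term supported on $\eta_{123}\wedge\pi^*\vol_Z$.

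The key elementary identities are $\eta_1\wedge\eta_{23}=\eta_2\wedge\eta_{31}=\eta_3\wedge\eta_{12}=\eta_{123}$ and all other wedges of $\eta_i$ with $\eta_{jk}$ vanish. Applying these to \eqref{eq:F_Wedge_Psi} gives
\[
(ia_j\eta_j)\wedge F_A\wedge\psi_{t,\varepsilon} = 2\big((1-2\varepsilon t^2)(a_1^2+a_2^2)+(1-2t^2)a_3^2\big)\,\eta_{123}\wedge\pi^*\vol_Z,
\]
while applying them to \eqref{eq:Cubic_Term} yields
\[
(ia_j\eta_j)\wedge\tfrac{1}{12}F_A^3 = -4(a_1^2+a_2^2+a_3^2)^2\,\eta_{123}\wedge\pi^*\vol_Z.
\]
Adding these, multiplying by $1/2$, and introducing $x=a_3$ and $y^2=a_1^2+a_2^2$ rearranges to
\[
\cF(A) = -\left[(x^2+y^2)\big(2(x^2+y^2)-1\big) + 2t^2(x^2+\varepsilon y^2)\right]\int_X \eta_{123}\wedge\pi^*\vol_Z,
\]
which is the formula \eqref{eq:functional.t.eps}.

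It remains only to identify the constant $c$. From \eqref{eq:g.ts} the $3$-Sasakian metric is $g^{ts}=\eta_i\otimes\eta_i+\pi^*g_Z$, whose Riemannian volume form is precisely $\eta_{123}\wedge\pi^*\vol_Z$; hence $c=\mathrm{Vol}(X,g^{ts})$, which is positive as $X$ is compact. The only step with any subtlety is keeping the signs and the factor $\tfrac{1}{12}$ (versus $\tfrac{1}{6}$ in \eqref{eq:Cubic_Term}) straight, but no genuine obstacle arises: the computation is a direct algebraic bookkeeping exercise once the inputs from \eqref{eq:F_Wedge_Psi} and \eqref{eq:Cubic_Term} are in hand.
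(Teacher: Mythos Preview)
Your proof is correct and follows exactly the approach the paper indicates: it invokes Lemma \ref{lem:functional.L0} and then combines the already-computed expressions \eqref{eq:F_Wedge_Psi} and \eqref{eq:Cubic_Term}. In fact you have supplied more detail than the paper, which simply asserts that the result follows ``after a lengthy but straightforward computation'' from \eqref{eq:functional.L0}; your bookkeeping of the signs, the factor $\tfrac12$ from \eqref{eq:Cubic_Term} versus \eqref{eq:functional.L0}, and the identification of $c$ with $\mathrm{Vol}(X,g^{ts})$ are all accurate.
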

\begin{remark}
Notice that $\cF$ in \eqref{eq:functional.t.eps} is bounded above and that, as can be seen from \eqref{eq:c}, the 3-Sasakian metric $g^{ts}$ may be rescaled so that $c=1$, which we will now do for convenience. 
\end{remark}

\begin{remark}
The critical points of the functional $\cF$ in \eqref{eq:functional.t.eps}, restricted to connections given by the ansatz \eqref{eq:Connection}, are given by the vanishing of
	\begin{align}
	\frac{\partial \cF}{\partial x} & = -2x \left( 4 (x^2+y^2) + 2 t^2 -1\right), \label{eq:crit.pt.1}\\
	\frac{\partial \cF}{\partial y} & = -2y \left( 4 (x^2+y^2) + 2\epsilon t^2 -1\right) .\label{eq:crit.pt.2}
	\end{align}
We see that we have solutions to \eqref{eq:crit.pt.1}--\eqref{eq:crit.pt.2} given by $x=0=y$, corresponding to the flat connection, and 
\begin{equation}\label{eq:crit.pt.+1}
x^2+y^2= \frac{1}{4} (1-2t^2)  \ \text{when $\epsilon =1$ and } 2t^2<1 ,
\end{equation}
	or $\epsilon=-1$ and either 
\begin{equation}\label{eq:crit.pt.-1}
x=0 \text{ and } y^2=\frac{1}{4}(1+2t^2), \quad \text{or} \quad y=0 \text{ and } x^2=\frac{1}{4}(1-2t^2) \ \text{when } 2t^2<1 . 
\end{equation}
Equations \eqref{eq:crit.pt.+1}--\eqref{eq:crit.pt.-1} coincide with the conditions \eqref{eq:constraint}--\eqref{eq:constraint.3} we derived earlier that gave our non-trivial deformed $\G_2$-instantons in Proposition \ref{prop:examples}.
\end{remark}

Using Proposition \ref{prop:functional} we can examine the relationship between the trivial flat connection and the functional $\cF$.  In particular, we see that the nature of the trivial connection as a critical point for $\cF$ depends on the choice of $\G_2$-structure.

\begin{lemma}\label{lem:flat.crit} Recall the notation of Proposition \ref{prop:functional}, in particular the functional $\cF$ in \eqref{eq:functional.t.eps}. Let $A_0$ be the trivial flat connection on the trivial complex line bundle $L_0$ over $X$.
Then the Hessian of $\cF$ is nondegenerate at $A_0$ if and only if $t\neq 1/\sqrt{2}$.  Moreover: 
\begin{itemize}
\item if $t\in (0,1/\sqrt{2})$ then $A_0$ is a local minimum of $\cF$;
\item if $t\geq 1/\sqrt{2}$ and $\varepsilon=+1$ then $A_0$ is a local maximum of $\cF$;
\item if $t\geq 1/\sqrt{2}$ and $\varepsilon=-1$ then $A_0$ is a saddle point of $\cF$. 
\end{itemize}
\end{lemma}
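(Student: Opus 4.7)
The strategy is to exploit the completely explicit polynomial formula for $\cF$ on the ansatz \eqref{eq:Connection} supplied by Proposition~\ref{prop:functional}, and then simply read off the local behavior at the origin in $(a_1,a_2,a_3)$-space, which is the point corresponding to the trivial flat connection $A_0$. Writing $R = a_1^2+a_2^2+a_3^2$, the formula \eqref{eq:functional.t.eps} becomes
\[
\cF(A) = -c\bigl[2R^2 + (2t^2-1)\,a_3^{\,2} + (2\varepsilon t^2-1)(a_1^2+a_2^2)\bigr],
\]
so $\cF$ is a pure polynomial of degrees $2$ and $4$ in the parameters, with no odd terms. I will read off the Hessian at the origin directly from the quadratic piece.

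The Hessian at $A_0$ is the diagonal quadratic form $2c(1-2t^2)\,a_3^{\,2} + 2c(1-2\varepsilon t^2)(a_1^2+a_2^2)$, whose eigenvalues are $2c(1-2t^2)$ with multiplicity one and $2c(1-2\varepsilon t^2)$ with multiplicity two. The nondegeneracy claim is immediate: when $\varepsilon=+1$ all three eigenvalues equal $2c(1-2t^2)$, and when $\varepsilon=-1$ the double eigenvalue $2c(1+2t^2)$ is strictly positive for $t>0$; so in both cases the Hessian fails to be nondegenerate precisely when $t=1/\sqrt{2}$.

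For the three sign cases with $t\neq 1/\sqrt{2}$, I will simply inspect signs of these eigenvalues. If $t\in(0,1/\sqrt{2})$ then $1-2t^2>0$ and $1-2\varepsilon t^2>0$ for either value of $\varepsilon$, so the Hessian is positive definite and $A_0$ is a strict local minimum. If $t>1/\sqrt{2}$ and $\varepsilon=+1$ then both $1-2t^2$ and $1-2\varepsilon t^2$ are negative, giving a strict local maximum. If $t>1/\sqrt{2}$ and $\varepsilon=-1$ then $1-2t^2<0<1+2t^2$, producing indefinite eigenvalues, hence a saddle.

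The only genuinely new content is the boundary case $t=1/\sqrt{2}$, where the Hessian degenerates and I must examine the full quartic. When $\varepsilon=+1$ the quadratic part vanishes entirely and $\cF=-2cR^2\le 0$, with equality exactly at $A_0$, so $A_0$ is a local (indeed global, on this family) maximum. When $\varepsilon=-1$ the formula collapses to $\cF=2c\bigl[(a_1^2+a_2^2)-R^2\bigr]$; restricted to the $a_3$-axis this is $-2c\,a_3^{\,4}\le 0$, while restricted to the plane $\{a_3=0\}$ it equals $2c(a_1^2+a_2^2)\bigl(1-(a_1^2+a_2^2)\bigr)$, which is strictly positive in a punctured neighborhood of the origin. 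Thus $A_0$ is a saddle, and these two computations extend the strict cases to the closed ranges $t\ge 1/\sqrt{2}$ asserted in the lemma. No step presents a substantial obstacle; the computation is linear in difficulty and the only care required is in handling the degenerate boundary using the quartic term.
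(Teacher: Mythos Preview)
Your proof is correct and follows essentially the same approach as the paper's: compute the Hessian of the explicit polynomial functional at the origin, read off the eigenvalues to settle the nondegenerate cases by sign inspection, and then handle the degenerate boundary $t=1/\sqrt{2}$ by direct examination of the quartic term along coordinate directions. The only cosmetic differences are that you work in the three-dimensional $(a_1,a_2,a_3)$-coordinates and retain the constant $c$, whereas the paper reduces to the two variables $(x,y)=(a_3,\sqrt{a_1^2+a_2^2})$ and normalises $c=1$; neither change affects the argument.
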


\begin{proof}
By direct computation, one can calculate the Hessian of the functional $\cF$ in \eqref{eq:functional.t.eps}.  At the flat connection $A_0$, when $(x,y)=(0,0)$, the Hessian of $\cF$ has eigenvalues 
\begin{equation}\label{eq:eigenvalues}
2(1-2t^2)\quad\text{and}\quad 2(1-2\varepsilon t^2 ).
\end{equation} 
We see immediately that the Hessian of $\cF$ is degenerate if and only if $2t^2=1$ as claimed.  Moreover, as long as $2t^2\neq 1$, the critical point is characterised by the signs of the eigenvalues in \eqref{eq:eigenvalues}. 
When $2t^2=1$ we see that 
\begin{equation}\label{eq:functional.special}
\cF(A)=-2(x^2+y^2)^2+(1-\varepsilon)y^2.
\end{equation}
When $\varepsilon=+1$, $\cF\leq 0$ and equals $0$ if and only if $x=y=0$.  When $\varepsilon=-1$, we instead see that inserting $x=0$ in \eqref{eq:functional.special} gives a function of $y$ with a local minimum at $y=0$, and for $y=0$ in \eqref{eq:functional.special} we obtain a local maximum at $x=0$.  The result then follows.
\end{proof}

We already observed the significance of the value $t=1/\sqrt{2}$ in Proposition \ref{prop:examples}. Lemma \ref{lem:flat.crit} now leads to the following additional observation concerning this value of $t$.

\begin{corollary}\label{cor:flat.obs}
The trivial flat connection on the trivial complex line bundle over $X$ is obstructed as a deformed $\G_2$-instanton for the $\G_2$-structures $\varphi_{1/\sqrt{2},\varepsilon}$ in \eqref{eq:G2_Structure} for $\varepsilon=\pm 1$.
\end{corollary}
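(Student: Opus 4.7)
My plan is to exhibit an explicit non-trivial class in the first cohomology $H^1$ of the deformation complex \eqref{eq:complex} at the trivial flat connection $A_0$, and then invoke Theorem \ref{thm:KY} to transfer this to a non-trivial class in $H^2$; by Definition \ref{dfn:moduli} this will show that $A_0$ is obstructed.

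The choice of candidate is guided by Lemma \ref{lem:flat.crit}, where the Hessian of $\cF$ at $A_0$ is shown to degenerate exactly when $t=1/\sqrt{2}$, with $\eta_3$ lying in a null direction for both $\varepsilon=\pm 1$.  I will take $\alpha=\eta_3$ and verify that it lies in the kernel of the linearisation $L\co\alpha\mapsto\psi_{1/\sqrt{2},\varepsilon}\wedge d\alpha$, which is what \eqref{eq:complex} reduces to at $A_0$ since $F_{A_0}=0$.  Expanding $d\eta_3=-2\omega_3-2\eta_{12}$ via \eqref{eq:structure.eta}, using the identities $\omega_i\wedge\omega_j=2\delta_{ij}\pi^*\vol_Z$ from \eqref{eq:omegas}, noting that $\eta_{ab}\wedge\eta_{cd}=0$ whenever the index sets share an element, and recognising that the horizontal distribution is $4$-dimensional (so that $\pi^*\vol_Z\wedge\omega_i=0$), a direct calculation produces
\begin{equation*}
\psi_{t,\varepsilon}\wedge d\eta_3=(4t^2-2)\,\eta_{12}\wedge\pi^*\vol_Z,
\end{equation*}
which vanishes precisely at $t=1/\sqrt{2}$, independently of $\varepsilon$. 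Since $d\eta_3\neq 0$, the $1$-form $\eta_3$ is not exact, so it represents a non-trivial class in $H^1$.

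To conclude, Theorem \ref{thm:KY} identifies the expected dimension of $\mathcal{M}(X,\varphi_{1/\sqrt{2},\varepsilon},L_0)$ with $b^1(X)$, which in the deformation theory of \cite{KawaiYamamoto} is the virtual index $\dim H^1-\dim H^2$.  A compact $3$-Sasakian $7$-manifold admits a positive-scalar-curvature Einstein metric, so $\pi_1(X)$ is finite by Myers' theorem and thus $b^1(X)=0$ (as already used in the proof of Corollary \ref{cor:moduli}).  The non-vanishing of $H^1$ then forces $\dim H^2\geq\dim H^1\geq 1$, so $A_0$ is obstructed with respect to both $\varphi_{1/\sqrt{2},+1}$ and $\varphi_{1/\sqrt{2},-1}$. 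The main obstacle is the bookkeeping in the wedge-product computation above, which, while conceptually transparent, requires tracking several vanishing mechanisms simultaneously; a secondary subtle point is the identification of the ``expected dimension'' in Theorem \ref{thm:KY} with the virtual difference $\dim H^1-\dim H^2$, which is implicit in the Kuranishi description of $\mathcal{M}$ from \cite{KawaiYamamoto}.
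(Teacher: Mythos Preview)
Your proof is correct, and the first half---exhibiting $\eta_3$ as a non-trivial element of $H^1$ via the explicit wedge computation---is exactly the content of the paper's argument, which instead phrases it as degeneracy of the Hessian of $\cF$ restricted to the ansatz (Lemma \ref{lem:flat.crit}); your direct calculation simply unwinds that Hessian statement.

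Where you genuinely diverge from the paper is in the passage from $H^1\neq 0$ to $H^2\neq 0$. The paper invokes Proposition \ref{prop:examples} to observe that $A_0$ is locally isolated among deformed $\G_2$-instantons \emph{within the ansatz} \eqref{eq:Connection} at $t=1/\sqrt{2}$, and infers from this that the infinitesimal deformation cannot integrate, hence $H^2\neq 0$. You instead appeal to the index identity $\dim H^1-\dim H^2=b^1(X)=0$ implicit in the Kuranishi description underlying Theorem \ref{thm:KY}. Your route is arguably cleaner: it avoids any question of whether isolation \emph{within the three-dimensional ansatz} really precludes integrating the deformation in the full infinite-dimensional space of connections (a point the paper's argument leaves implicit). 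On the other hand, your approach leans on the identification of ``expected dimension'' with the alternating sum of cohomology dimensions, which---as you rightly flag---is not stated in the paper itself and must be extracted from \cite{KawaiYamamoto}. Both arguments ultimately rest on the same circle of ideas (finite $\pi_1$, vanishing $b^1$, and the structure of the deformation complex), but yours packages the final step more transparently.
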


\begin{proof}
When $t=1/\sqrt{2}$, Lemma \ref{lem:flat.crit} shows that the Hessian of $\cF$ in \eqref{eq:functional.t.eps} is degenerate at the trivial flat connection $A_0$.  Thus, there exist non-trivial infinitesimal deformations of $A_0$ as a deformed $\G_2$-instanton within the ansatz \eqref{eq:Connection}, i.e.~$H^1$ of the complex \eqref{eq:complex} is non-zero.  

However, we know from Proposition \ref{prop:examples} that $A_0$ is locally isolated as a deformed $\G_2$-instanton for $t=1/\sqrt{2}$ amongst those given by \eqref{eq:Connection}, and so the non-trivial infinitesimal deformation must be obstructed, i.e.~$H^2$ of the complex \eqref{eq:complex} must also be non-zero. 
\end{proof}

\begin{remark}
 The fact that the flat connection is obstructed at $t=1/\sqrt{2}$ was to be expected, as it is at this point that the transition occurs when the $2$-sphere or two points consisting of non-flat deformed $\G_2$-instantons ``shrinks'' and merges with the flat connection (see Figure \ref{fig:moduli}). Nevertheless, this observation contrasts with the case of nearly parallel $\G_2$-structures for which the flat connection is always unobstructed (see Lemma \ref{lem:trivial.moduli}). 
\end{remark}

To conclude, we compare the functional $\cF$ in \eqref{eq:functional.t.eps} for the pairs of isometric $\G_2$-structures $\varphi_{1/\sqrt{5}, \epsilon}$ and $\varphi_{1,\epsilon}$, for $\epsilon\in\{\pm 1\}$, which induce the Einstein metrics $g^{np}$ and $g^{ts}$ respectively. 

\begin{example} 
 The coclosed $\G_2$-structures $\varphi_{1/\sqrt{5},+1}=\varphi^{np}$ and $\varphi_{1/\sqrt{5}, -1}$ both induce the strictly nearly parallel metric $g^{np}$ on the 3-Sasakian 7-manifold $X$, which has the property that the metric cone on $(X,g^{np})$ has holonomy $\Spin(7)$. These $\G_2$-structures determine rather different functionals $\cF$ by Proposition \ref{prop:examples}. Figure \ref{fig:eps.np} plots the functional $\cF$, restricted to connections given by the ansatz \eqref{eq:Connection}, as in \eqref{eq:functional.t.eps} for these two $\G_2$-structures. 
\vspace{-10pt} 
 
\begin{figure}[H]
\centering
	\includegraphics[width=0.48\textwidth,height=0.3\textheight]{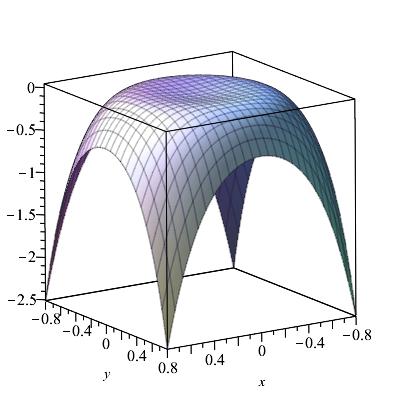}
	\quad\includegraphics[width=0.48\textwidth,height=0.3\textheight]{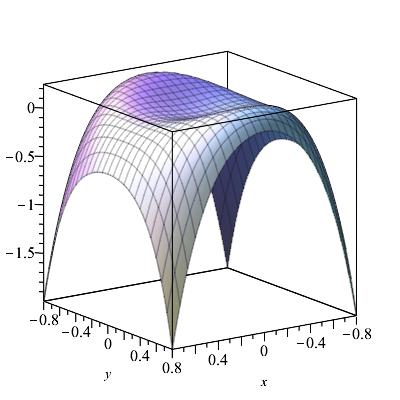}
	\caption{The functional $\cF$ for $(t,\varepsilon)=(1/\sqrt{5},+1)$ and $(t,\varepsilon)=(1/\sqrt{5},-1)$.}\label{fig:eps.np}
\end{figure}

\noindent One can just discern the local minimum at the origin in the left-hand plot in Figure \ref{fig:eps.np}, predicted by Lemma \ref{lem:flat.crit}. We also illustrate the difference between these cases via their levels sets in Figure \ref{fig:lev.np}.  One can see the circle of critical points (which are local maxima) for $\varepsilon=+1$ which gives the 2-sphere of non-trivial deformed $\G_2$-instantons in Proposition \ref{prop:examples}, in contrast to the two pairs of critical points for $\varepsilon=-1$ which give the circle (given by local maxima) and two further examples (which are saddle points) of non-trivial deformed $\G_2$-instantons in Proposition \ref{prop:examples}.

\begin{figure}[H]
	\centering
	\includegraphics[width=0.48\textwidth,height=0.3\textheight]{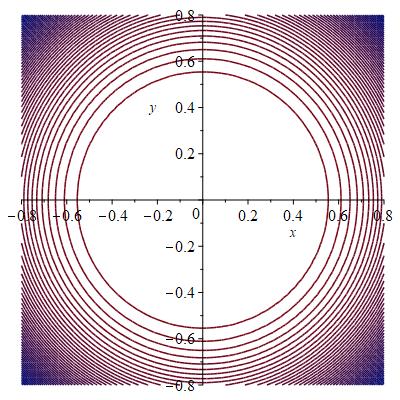}
\quad
	\includegraphics[width=0.48\textwidth,height=0.3\textheight]{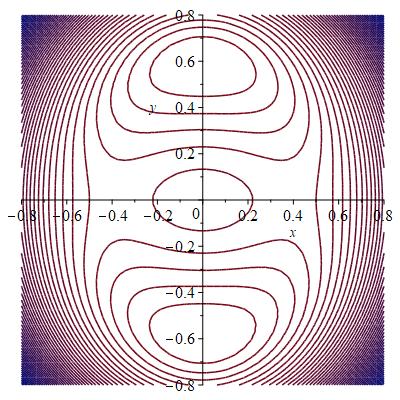}
	\caption{Level sets of $\cF$ for $(t,\varepsilon)=(1/\sqrt{5},+1)$ and $(t,\varepsilon)=(1/\sqrt{5},-1)$.}\label{fig:lev.np}
\end{figure}

\end{example}

\begin{example} 
	We now focus on the coclosed $\G_2$-structures $\varphi_{1, \epsilon}$, for $\epsilon=\pm 1$, recalling that $\varphi^{ts}=\varphi_{1,-1}$. These $\G_2$-structures induce the $3$-Sasakian metric $g^{ts}$ on $X$, which is that whose metric cone is hyperk\"ahler. In this case, we again already know the functionals $\cF$ for these two $\G_2$-structures are very different by Proposition \ref{prop:examples}, and further evidence is provided by the following plots of the functional $\cF$ in \eqref{eq:functional.t.eps} in Figure \ref{fig:eps.ts}.
	
	\vspace{-10pt} 
	
	\begin{figure}[H]
	\centering
	\includegraphics[width=0.48\textwidth,height=0.3\textheight]{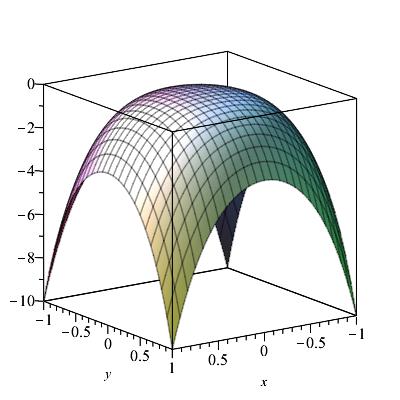}
\quad \includegraphics[width=0.48\textwidth,height=0.3\textheight]{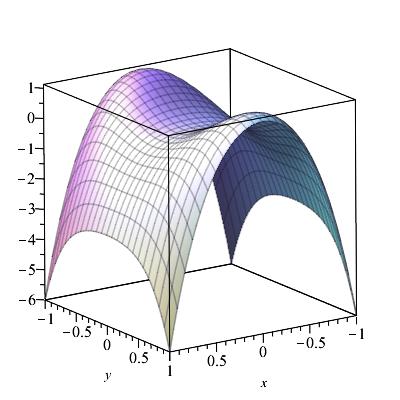}
		\caption{The functional $\cF$ for $(t,\varepsilon)=(1,+1)$ and $(t,\varepsilon)=(1,-1)$.}\label{fig:eps.ts}
	\end{figure}
	
	As for the level sets of the functional $\cF$, these are plotted in Figure \ref{fig:lev.ts} for each case.  For $\varepsilon=+1$ we see that the only critical point is the origin,  giving the trivial flat connection, and instead we have a pair of critical points (which are local maxima) for $\varepsilon=-1$ which define a circle of deformed $\G_2$-instantons from Proposition \ref{prop:examples}.
	
	\begin{figure}[H]
\includegraphics[width=0.48\textwidth,height=0.3\textheight]{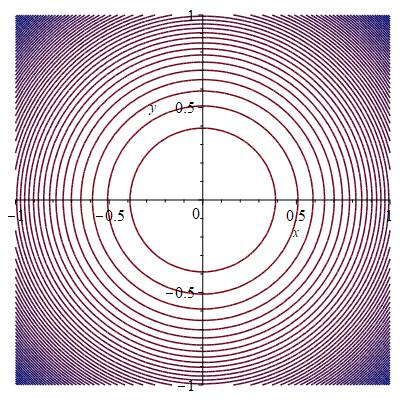}
	\quad
	\includegraphics[width=0.48\textwidth,height=0.3\textheight]{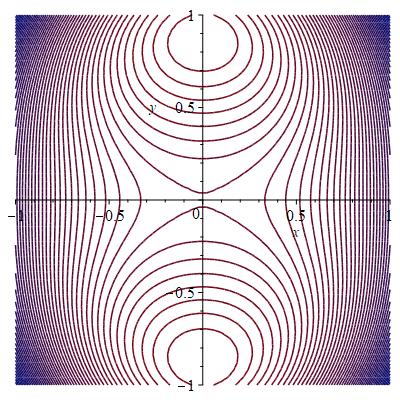}
		\caption{Level sets of $\cF$ for $(t,\varepsilon)=(1,+1)$ and $(t,\varepsilon)=(1,-1)$.}\label{fig:lev.ts}
	\end{figure}
	
\end{example}

\bibliographystyle{plain}	


\begin{bibdiv}
\begin{biblist}

\bib{Boyer2001}{incollection}{
      author={Boyer, C.},
      author={Galicki, K},
       title={$3$-{S}asaki manifolds},
        date={2001},
   booktitle={Surveys in {D}ifferential {G}eometry, vol. 6; {E}ssays on
  {E}instein {M}anifolds},
      volume={6},
   publisher={International Press},
}

\bib{BGM}{article}{
      author={Boyer, C.P.},
      author={Galicki, K.},
      author={Mann, B.M.},
       title={The geometry and topology of {$3$}-{S}asakian manifolds},
        date={1994},
        ISSN={0075-4102},
     journal={J. Reine Angew. Math.},
      volume={455},
       pages={183\ndash 220},
}

\bib{BallOliveira}{article}{
      author={Ball, G.},
      author={Oliveira, G.},
       title={Gauge theory on {A}loff-{W}allach spaces},
        date={2019},
        ISSN={1465-3060},
     journal={Geom. Topol.},
      volume={23},
      number={2},
       pages={685\ndash 743},
         url={https://doi.org/10.2140/gt.2019.23.685},
}

\bib{CYlinks}{article}{
      author={Calvo-Andrade, O.},
      author={Rodr\'iguez~D\'iaz, L.O.},
      author={S\'a~Earp, H.N.},
       title={Gauge theory and ${G}_2$-geometry on {C}alabi--{Y}au links},
        date={2020},
     journal={Rev. Mat. Iberoam.},
       pages={to appear in print},
}

\bib{Clarke2020}{article}{
      author={Clarke, A.},
      author={Garcia-Fernandez, M.},
      author={Tipler, C.},
       title={${T}$-dual solutions and infinitesimal moduli of the
  ${G}_2$-{S}trominger system},
        date={2020},
     journal={ArXiv e-prints},
      eprint={2005.09977},
}

\bib{GaoChen}{article}{
      author={Chen, G.},
       title={Supercritical deformed {H}ermitian-{Y}ang--{M}ills equation},
        date={2020},
     journal={ArXiv e-prints},
      eprint={2005.12202},
}

\bib{Clarke2020B}{article}{
      author={Clarke, A.},
      author={Oliveira, G.},
       title={${S}pin(7)$-instantons from evolution equations},
        date={2020},
     journal={J. Geom. Anal.},
       pages={to appear in print},
}

\bib{Donaldson2006}{incollection}{
      author={Donaldson, S.~K.},
       title={Two-forms on four-manifolds and elliptic equations},
        date={2006},
   booktitle={Inspired by {S}. {S}. {C}hern},
      series={Nankai Tracts Math.},
      volume={11},
   publisher={World Sci. Publ., Hackensack, NJ},
       pages={153\ndash 172},
         url={https://doi.org/10.1142/9789812772688_0007},
}

\bib{FKMS1997}{article}{
      author={Friedrich, Th.},
      author={Kath, I.},
      author={Moroianu, A.},
      author={Semmelmann, U.},
       title={On nearly parallel {$G_2$}-structures},
        date={1997},
     journal={J. Geom. Phys.},
      volume={23},
       pages={259\ndash 286},
}

\bib{Fine2018}{article}{
      author={Fine, J.},
      author={Yao, C.},
       title={Hypersymplectic 4-manifolds, the {$G_2$}-{L}aplacian flow, and
  extension assuming bounded scalar curvature},
        date={2018},
        ISSN={0012-7094},
     journal={Duke Math. J.},
      volume={167},
      number={18},
       pages={3533\ndash 3589},
         url={https://doi.org/10.1215/00127094-2018-0040},
}

\bib{GWZ}{article}{
      author={Grove, K.},
      author={Wilking, B.},
      author={Ziller, W.},
       title={Positively curved cohomogeneity one manifolds and 3-{S}asakian
  geometry},
        date={2008},
        ISSN={0022-040X},
     journal={J. Differential Geom.},
      volume={78},
      number={1},
       pages={33\ndash 111},
         url={http://ezproxy-prd.bodleian.ox.ac.uk:4287/euclid.jdg/1197320603},
}

\bib{KarigiannisLeung}{article}{
      author={Karigiannis, S.},
      author={Leung, N.~C.},
       title={Hodge theory for {$G_2$}-manifolds: intermediate {J}acobians and
  {A}bel-{J}acobi maps},
        date={2009},
        ISSN={0024-6115},
     journal={Proc. Lond. Math. Soc. (3)},
      volume={99},
      number={2},
       pages={297\ndash 325},
         url={https://ezproxy-prd.bodleian.ox.ac.uk:2095/10.1112/plms/pdp004},
}

\bib{G2}{book}{
      editor={Karigiannis, S.},
      editor={Leung, N.C.},
      editor={Lotay, J.D.},
       title={Lectures and surveys on ${G}_2$-manifolds and related topics},
   publisher={Springer},
        date={2020},
}

\bib{KawaiYamamoto}{article}{
      author={Kawai, K.},
      author={Yamamoto, H.},
       title={Deformation theory of deformed {H}ermitian {Y}ang--{M}ills
  connections and deformed {D}onaldson--{T}homas connections},
        date={2020},
     journal={ArXiv e-prints},
      eprint={2004.00532},
}

\bib{LeeLeung}{article}{
      author={Lee, J.-H.},
      author={Leung, N.~C.},
       title={Geometric structures on {$G_2$} and {${\rm Spin}(7)$}-manifolds},
        date={2009},
        ISSN={1095-0761},
     journal={Adv. Theor. Math. Phys.},
      volume={13},
      number={1},
       pages={1\ndash 31},
  url={http://ezproxy-prd.bodleian.ox.ac.uk:4287/euclid.atmp/1232551518},
}

\bib{LYZ}{article}{
      author={Leung, N.~C.},
      author={Yau, S.-T.},
      author={Zaslow, E.},
       title={From special {L}agrangian to {H}ermitian-{Y}ang--{M}ills via
  {F}ourier--{M}ukai transform},
        date={2000},
        ISSN={1095-0761},
     journal={Adv. Theor. Math. Phys.},
      volume={4},
      number={6},
       pages={1319\ndash 1341},
  url={https://ezproxy-prd.bodleian.ox.ac.uk:2095/10.4310/ATMP.2000.v4.n6.a5},
}

\bib{MMMS}{article}{
      author={Mari\~{n}o, M.},
      author={Minasian, R.},
      author={Moore, G.},
      author={Strominger, A.},
       title={Nonlinear instantons from supersymmetric {$p$}-branes},
        date={2000},
        ISSN={1126-6708},
     journal={J. High Energy Phys.},
      number={1},
       pages={Paper 5, 32},
  url={https://ezproxy-prd.bodleian.ox.ac.uk:2095/10.1088/1126-6708/2000/01/005},
}

\bib{YWang}{article}{
      author={Wang, Y.},
       title={Moduli spaces of ${G}_2$ and ${S}pin(7)$-instantons on product
  manifolds},
        date={2018},
     journal={ArXiv e-prints},
      eprint={1803.05899},
}

\end{biblist}
\end{bibdiv}


\end{document}